\documentclass[11pt, a4paper]{amsart}

\usepackage{asymptote}
\usepackage{xcolor}
\usepackage{hyperref}
\hypersetup{
  hidelinks,
  pdftitle={Geometric Large-Deviation-Type Principles for Mixed Measures},
  pdfauthor={Malak Lafi and Artem Zvavitch},
  pdfkeywords={mixed measures, log-concavity, large deviations}
}
\usepackage{float}
\usepackage{tikz}
\usetikzlibrary{arrows.meta,calc}

\usepackage[font=small,labelfont=bf]{caption}
\usepackage{amsmath,amsthm,amssymb}
\usepackage{mathtools}
\mathtoolsset{showonlyrefs}
\makeatletter
\renewcommand\subsection{\@startsection{subsection}{2}{\z@}%
  {-3.25ex\@plus -1ex \@minus -.2ex}%
  {1.5ex \@plus .2ex}%
  {\normalfont\large\bfseries}}
\makeatother

\title{Geometric Large-Deviation-Type Principles for Mixed Measures}
\author{Malak Lafi}
\address{Department of Mathematical Sciences, Kent State University, Kent, OH 44242, USA}
\email{mlafi1@kent.edu}
\author{Artem Zvavitch}
\address{Department of Mathematical Sciences, Kent State University, Kent, OH 44242, USA}
\email{azvavitc@kent.edu}
\thanks{Both authors are supported in part by the U.S. National Science Foundation Grant DMS-439984 and the United States--Israel Binational Science Foundation (BSF) Grant 2018115.}

\subjclass[2020]{Primary: 52A20, 52A21; Secondary: 46T12, 60F10} 
\keywords{mixed measures, log-concavity, large deviations}
\date{}

\begin{document}

\begin{abstract} We study a geometric analogue of the large deviation principle for mixed measures associated with a class of $\log$-concave probability measures whose densities depend on the gauge of a convex body. For convex bodies in $\mathbb{R}^n$, we prove a geometric large-deviation-type asymptotic for first-order mixed measures, in which the decay under dilation is governed by a natural inradius associated with the measure. In the planar case, we derive an explicit representation
and prove a genuine logarithmic limit for second-order mixed
measures.  As an application, we prove a comparison theorem showing that asymptotic dominance under dilation forces inclusion between convex bodies.
\end{abstract}
\numberwithin{equation}{section}
\newtheorem{theorem}{Theorem}[section]
\newtheorem{lemma}[theorem]{Lemma}

\newtheorem{remark}[theorem]{Remark}

\maketitle
\tableofcontents

\section{Introduction}

Mixed measures extend the variational structure of the classical Brunn--Minkowski theory from Lebesgue measure to more general measures. They arise as first and higher variations of a measure under Minkowski addition and therefore encode an interaction between the geometry of convex bodies and the decay of the underlying density.

In this paper, we study the behavior under dilation of mixed measures associated with a class of $\log$-concave measures whose densities depend on the gauge of a convex body. Our main question is whether the exponential decay of such mixed measures is governed by a natural geometric parameter, in the same way that the decay of the measure of a dilated complement is governed by an inradius in the large-deviation-type principle obtained in \cite{LZ}.

Let $L\subset\mathbb R^n$ be a convex body containing the origin in its interior, let $\phi:[0,\infty)\to[0,\infty)$ be nondecreasing, nonconstant, and convex, and let $\mu$ be the log-concave probability measure on $\mathbb{R}^n$ whose density is proportional to
$
e^{-\phi(\|x\|_L)}.$

For a convex body $K$ containing the origin in its interior, define its centered $L$-inradius by
\[
r(K,L)=\max\{R>0:RL\subset K\}.
\]
Our first main result shows that, for convex bodies $K,M\subset\mathbb R^n$ containing the origin in their interiors,
\[
\limsup_{t\to\infty}
\frac{\ln\mu(tK;M)}{\phi(r(K,L)t)}=-1.
\]
Moreover, for every $\varepsilon>0$, the set of parameters $t$ for which the normalized logarithm differs from $-1$ by at least $\varepsilon$ has finite Lebesgue measure. Thus the centered $L$-inradius determines the geometric large-deviation-type rate of the first mixed measure.

The proof separates naturally into two parts. The first is geometric: near a contact point between $K$ and $r(K,L)L$, we obtain a polynomial lower bound for the size of the portion of $\partial K$ contained in a slightly larger dilate of $L$. The second is analytic: we establish a one-dimensional Laplace-type estimate for the integrals resulting from this geometric bound. We formulate these two ingredients as separate lemmas in Section~\ref{twobodies}.

As applications, we recover the large-deviation-type principle for $\mu((tK)^c)$ from \cite{LZ} and prove a comparison theorem: if
\[
\mu(tRL;M)\ge\mu(tK;M)
\]
for all sufficiently large $t$, then $RL\subseteq K$.

We also study the second mixed measure $\mu(A;B,C)$. In the planar case, we derive an explicit integral representation for measures with densities of the form $e^{-\phi(\|x\|_L)}$. Under suitable smoothness assumptions and the additional condition,
\[
\lim_{t\to\infty}\frac{\ln\phi'(t)}{\phi(t)}=0,
\]
we prove that the following limit exists and equals $-1$:
\[
\lim_{t\to\infty}
\frac{\ln[-\mu(tA;B,C)]}{\phi(r(A,L)t)}=-1.
\]
The Gaussian case is treated separately to illustrate the geometric mechanism of the argument.

We use the term large-deviation-type principle in a geometric, Laplace-type sense: it refers to exponential asymptotics under dilation rather than to a classical large deviation principle on a fixed probability space. For background on classical large-deviation principles and Laplace-type asymptotics, we refer to \cite{DZ} and \cite{Bruijn}, respectively.

The paper is organized as follows. Section~\ref{preliminaries} collects the notation and background on convex bodies, mixed volumes, and mixed measures. In Section~\ref{twobodies}, we prove the large-deviation-type principle for first mixed measures. Section~\ref{comparison} contains the applications to complements and comparison problems. Finally, Section~\ref{three} treats second mixed measures in the plane.

\section{Background and notation}\label{preliminaries}

We begin with notation and several standard definitions.  Throughout, $n\ge2$ unless explicitly stated otherwise. We denote by $\mathbb R^n$ the $n$-dimensional Euclidean space. The inner product of $x,y\in\mathbb R^n$ is denoted by $\langle x,y\rangle$, and $|\cdot|$ denotes the Euclidean norm. We write $B_2^n$ for the closed Euclidean unit ball and $\mathbb S^{n-1}=\partial B_2^n$ for the unit sphere. A convex body is a compact convex subset of $\mathbb R^n$ with nonempty interior. The support function of a compact convex set $K$ is
\[
h_K(x)=\sup_{y\in K}\langle x,y\rangle.
\]
If $0\in\operatorname{int}K$, its Minkowski functional (or gauge) is
\[
\|x\|_K=\inf\{\lambda\ge 0:x\in\lambda K\}.
\]
For arbitrary subsets $A,B\subset\mathbb R^n$, their Minkowski sum is $A+B=\{a+b:a\in A,\ b\in B\}$, and $\alpha A=\{\alpha x:x\in A\}$ for $\alpha\in\mathbb R$. For a measurable set $A\subset\mathbb R^n$, $|A|$ denotes its $n$-dimensional Lebesgue measure, while $\mathcal H^k$ denotes the $k$-dimensional Hausdorff measure. We denote the interior and boundary of a set $A$ by $\operatorname{int}A$ and $\partial A$, respectively. For a convex body $K$ and a linear subspace $H$, we denote the orthogonal projection of $K$ onto $H$ by $P_HK$.

Recall that a function $f$ is convex on a convex set $C$ if
\[
f(\lambda x+(1-\lambda)y)\le \lambda f(x)+(1-\lambda)f(y)
\]
for all $x,y\in C$ and $\lambda\in[0,1]$; it is concave if $-f$ is convex. A Borel measure $\mu$ on $\mathbb R^n$ is called log-concave if
\[
\mu(\lambda A+(1-\lambda)B)\ge \mu(A)^\lambda\mu(B)^{1-\lambda}
\]
for all Borel sets $A,B\subset\mathbb R^n$ and $\lambda\in(0,1)$. Borell's classification \cite{Borell} implies that a log-concave Radon measure is supported on an affine subspace and has a log-concave density with respect to Lebesgue measure on that subspace. In the full-dimensional case, this density has the form $e^{-\psi}$, where $\psi:\mathbb R^n\to(-\infty,\infty]$ is convex. Our main examples have densities proportional to $e^{-\phi(\|x\|_L)}$, where $L$ is a convex body with $0\in\operatorname{int}L$ and $\phi$ is nondecreasing and convex.

\subsection{Classical mixed volumes}
Mixed volumes play a central role in understanding the geometric interaction between convex bodies in $\mathbb R^n$. We briefly recall some classical facts and definitions that will be used throughout this paper; further details can be found in Schneider's monograph \cite{Schneider}. Let $K_1,\dots,K_m$ be compact convex subsets of $\mathbb R^n$, and let $t_1,\dots,t_m\ge0$. Minkowski's theorem on mixed volumes asserts that the volume of the Minkowski linear combination $t_1K_1+\dots+t_mK_m$ is a homogeneous polynomial of degree $n$ in the coefficients $t_1,\dots,t_m$; namely,
\begin{equation} \label{eq:mixed-volume-polynomial}
    |t_1K_1+ \dots +t_mK_m|= \sum _{i_1,\dots, i_n=1}^{m} t_{i_1} \dots t_{i_n} V(K_{i_1}, \dots, K_{i_n}).
\end{equation}
The coefficients $V(K_{i_1},\dots,K_{i_n})\ge0$ are invariant under permutations of their arguments and are called the \textit{mixed volumes} of the $n$-tuple $(K_{i_1},\dots,K_{i_n})$. A particularly important instance of \eqref{eq:mixed-volume-polynomial} is Steiner's formula for two compact convex sets $K,L\subset\mathbb R^n$:
 \begin{equation*} 
     |K+ t L|= \sum _{j=0}^{n} \binom{n}{j} t^j V(K [n-j], L[j]),
 \end{equation*}
where $V(K[n-j],L[j])$ denotes the mixed volume involving $n-j$ copies of $K$ and $j$ copies of $L$. The term with $j=1$, namely $V(K[n-1],L)$, is often referred to as the \textit{first mixed volume} of $K$ and $L$. It admits the well-known limit representation
  \begin{equation} \label{eq:first-mixed-volume}
     V(K [n-1], L) = \frac{1}{n} \lim _{\varepsilon \to 0^+} \frac{|K+ \varepsilon L| -|K|}{\varepsilon}=\frac{1}{n} \frac{d}{d\varepsilon} |K+\varepsilon L| \Bigr |_{\varepsilon =0}.
 \end{equation}
 In particular, the surface area of $K$ can be computed as  $\mathcal H^{n-1}(\partial K)=nV(K[n-1],B_2^n).$
This representation highlights the variational nature of mixed volumes and provides the foundation for their generalization to arbitrary measures, which we discuss next.
\subsection{From mixed volumes to mixed measures}
The concept of mixed volume extends naturally from Lebesgue measure to more general Borel measures on $\mathbb R^n$. This extension was developed by Livshyts \cite{LG}, who introduced \textit{mixed measures} to capture the first-order variation of a measure under Minkowski addition. Given a Borel measure $\mu$ and Borel sets $K,L\subset\mathbb R^n$, the \textit{mixed measure} of $K$ and $L$ is defined by
\begin{equation} \label{eq:mixed-measure-definition}
    \mu (K; L):=\liminf _{\varepsilon\to 0^+}\frac{\mu (K+\varepsilon L)- \mu (K)}{\varepsilon}.
\end{equation}
When $\mu$ is Lebesgue measure, equation \eqref{eq:mixed-measure-definition} gives exactly
$\mu(K;L)=nV(K[n-1],L)$.

A particular case of mixed measures arises when the second body is the Euclidean ball. For a Borel measure $\mu$ on $\mathbb{R}^n$ and a convex body $K$, the quantity
\[
\mu^+(\partial K):=\liminf_{\varepsilon\to0^+}
\frac{\mu(K+\varepsilon B_2^n)-\mu(K)}{\varepsilon}
\]
is known as the \emph{outer Minkowski content} of $K$ with
respect to $\mu$. For the classical theory of Minkowski content,
see \cite[Sections~3.2.34--3.2.40]{Federer}; for the weighted
setting, see \cite{LG,WBMTI}. When $\mu$ has a continuous density,
this $\liminf$ is in fact a limit.
 
To obtain an integral representation of mixed measures, we first recall the surface area measure. For a convex body $K\subset\mathbb R^n$, the surface area measure $S_K$ is the Borel measure on $\mathbb S^{n-1}$ defined through the Gauss map $n_K:\partial K\to\mathbb S^{n-1}$, which assigns to almost every point of $\partial K$ its unique outer unit normal. More precisely, for every Borel set $\omega\subset\mathbb S^{n-1}$,
$S_K(\omega)=\mathcal H^{n-1}(n_K^{-1}(\omega))$. If $K$ belongs to the class $C_+^2$, that is, if $\partial K$ is of class $C^2$ and has strictly positive Gauss curvature, then $S_K$ is absolutely continuous with respect to spherical Lebesgue measure, and its density is the curvature function $f_K$; thus $dS_K(u)=f_K(u)\,du$.
 Following \cite{LG,WBMTI}, when $\mu$ has a continuous density $\rho$, the corresponding weighted surface area measure $S_K^\mu$ is defined by
\begin{equation}\label{eq:weighted-area-measure}
\int_{\mathbb S^{n-1}}f(u)\,dS_K^\mu(u)
:=\int_{\partial K}f(n_K(y))\rho(y)\,d\mathcal H^{n-1}(y),
\end{equation}
for every $f\in C(\mathbb S^{n-1})$. If $K$ is a convex body and $L$ is a compact convex set, then the first variation formula gives \cite{LG,WBMTI}
\begin{equation}
\mu(K;L)=\int_{\mathbb S^{n-1}}h_L(u)\,dS_K^\mu(u).
\end{equation}
Equivalently,
\begin{equation}\label{eq:first-variation}
\mu(K;L)=\int_{\partial K}h_L(n_K(y))\rho(y)\,d\mathcal H^{n-1}(y).
\end{equation}
We also use the three-body mixed measure introduced in \cite{WBMTI}. Whenever the mixed right derivative exists, it is defined by
\[
\mu(A;B,C)=\left.\frac{\partial^2}{\partial s\,\partial t}
\mu(A+sB+tC)\right|_{s=t=0^+}.
\]
The following representation theorem will be used later to derive an explicit formula for three-body mixed measures in the planar setting.
\begin{theorem}[{\cite[Theorem~2.7]{WBMTI}}]\label{theorem:WBMTI}
Let $n\ge2$, let $\mu$ have a $C^2$ density $\rho$, let $A\in C_+^2$, and let $B,C$ be compact convex sets. Then
\begin{equation}\label{mimeas3}
\mu(A;B,C)=(n-1)\int_{\mathbb S^{n-1}}h_C(u)\,dS_{A;B}^\mu(u),
\end{equation}
where
\begin{align}\label{defmeas}
dS_{A;B}^\mu(u)
&=\rho(n_A^{-1}(u))\,dS_{A[n-2],B[1]}(u)\nonumber\\
&\quad+\frac1{n-1}\left\langle\nabla\rho(n_A^{-1}(u)),\nabla h_B(u)\right\rangle dS_A(u).
\end{align}
\end{theorem}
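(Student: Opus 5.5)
The plan is to compute the mixed derivative in two stages: first differentiate in $t$ using the first variation formula, then differentiate the result in $s$ using support functions. Since $A+sB+tC=(A+sB)+tC$, the first variation formula \eqref{eq:first-variation} applied to the convex body $A+sB$ gives, for each fixed $s\ge 0$,
\[
\frac{\partial}{\partial t}\mu(A+sB+tC)\Big|_{t=0^+}=\mu(A+sB;C)=\int_{\mathbb S^{n-1}}h_C(u)\,dS^\mu_{A+sB}(u).
\]
It then remains to show that $s\mapsto \int h_C\,dS^\mu_{A+sB}$ is right-differentiable at $s=0$, with derivative $(n-1)\int h_C\,dS^\mu_{A;B}$.

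First I would treat the case where $B$ (and $C$) are also of class $C^2_+$, so that all support functions are $C^2$. For a body $K\in C^2_+$, the boundary point with outer normal $u$ is $n_K^{-1}(u)=\nabla h_K(u)$, and $dS_K(u)=\det(D^2_{\mathbb S}h_K(u))\,du$. Here $D^2_{\mathbb S}h_K$ is the restriction of the Hessian of $h_K$ to $u^\perp$, an $(n-1)\times(n-1)$ matrix. Since $h_{A+sB}=h_A+sh_B$, the body $A+sB$ lies in $C^2_+$ and
\[
dS^\mu_{A+sB}(u)=\rho\bigl(\nabla h_A(u)+s\nabla h_B(u)\bigr)\det\bigl(D^2_{\mathbb S}h_A+sD^2_{\mathbb S}h_B\bigr)(u)\,du .
\]
The determinant is a polynomial in $s$. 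Its linear coefficient is $(n-1)$ times the mixed discriminant $D(D^2_{\mathbb S}h_A[n-2],D^2_{\mathbb S}h_B)$, which is exactly the density of $S_{A[n-2],B[1]}$. Because $\rho\in C^2$, we have $\frac{d}{ds}\rho(\nabla h_A+s\nabla h_B)|_{s=0}=\langle\nabla\rho(\nabla h_A(u)),\nabla h_B(u)\rangle$. The product rule then gives the integrand
\[
(n-1)\rho(n_A^{-1}(u))\,dS_{A[n-2],B[1]}(u)+\langle\nabla\rho(n_A^{-1}(u)),\nabla h_B(u)\rangle\,dS_A(u)=(n-1)\,dS^\mu_{A;B}(u),
\]
which matches \eqref{defmeas}. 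Everything is uniformly bounded for $s\in[0,1]$ on the compact sphere, so dominated convergence lets us differentiate under the integral sign, and \eqref{mimeas3} follows in the smooth case.

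For general compact convex $B,C$, I would approximate them by $C^2_+$ bodies $B_k\to B$ and $C_k\to C$ in the Hausdorff metric. The dependence on $C$ is harmless: $h_{C_k}\to h_C$ uniformly, and both sides are linear in $h_C$. The dependence on $B$ is the main obstacle. There are two ways to handle it.

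1. \emph{Via the explicit second-order formula.} Write $\mu(A+sB+tC)=\mu(A+sB)+t\,\mu(A+sB;C)+o(t)$ with $\mu(A+sB;C)=\int h_C\,dS^\mu_{A+sB}$. The measure $S_{A+sB}$ is the polynomial $\sum_j\binom{n-1}{j}s^jS_{A[n-1-j],B[j]}$ of mixed area measures, and $n^{-1}_{A+sB}(u)=\nabla h_A(u)+s\nabla h_B(u)$ holds for $S_{A+sB}$-a.e.\ $u$, since $h_A$ is smooth and $h_B$ is differentiable a.e. Differentiating in $s$ then gives the formula directly, using only that $\nabla h_B$ is bounded by $\max_{B}|x|$. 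One must make sure the $o(t)$ term is uniform in $s$ so that the mixed right derivative exists; I would get this from a second-order Taylor bound that uses $\rho\in C^2$ and the boundedness of all bodies involved.

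2. \emph{Via the Hausdorff approximation.} Pass to the limit in the smooth identity. Here one uses three facts: the mixed area measures $S_{A[n-2],B_k}$ converge weakly to $S_{A[n-2],B}$; the functions $\nabla h_{B_k}\to\nabla h_B$ converge $S_A$-a.e.\ with a uniform bound, since $S_A$ is absolutely continuous because $A\in C^2_+$; and $\mu(A+sB_k+tC_k)\to\mu(A+sB+tC)$ together with its derivatives, uniformly enough in $(s,t)$ to identify the limit of the mixed derivatives.

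I expect this uniformity, which is what justifies interchanging the limits in $s$, $t$ and $k$, to be the most delicate step, and I would establish it by a uniform $C^2$ bound on $(s,t)\mapsto\mu(A+sB+tC)$ near $(0,0)$.
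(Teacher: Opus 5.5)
The paper does not prove this statement. It quotes it from \cite{WBMTI}, and its only hint about the cited argument is the remark, in the proof of Theorem~\ref{theorem_mu}, that the argument includes continuity under smooth approximation of $B$ and $C$. So there is no internal proof to compare with, and your proposal has to be judged on its own.

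Your reduction to $g(s)=\mu(A+sB;C)=\int h_C\,dS^\mu_{A+sB}$ via \eqref{eq:first-variation} is fine. For the iterated derivative $\partial_s(\partial_t\,\cdot)$ the $t$-step is exact for each fixed $s$, so no uniformity in $s$ of the $o(t)$ term is needed. Your computation when $h_B\in C^2$ is also correct.

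The gap is in the passage to general $B$. In route~1 you assert that $n_{A+sB}^{-1}(u)=\nabla h_A(u)+s\nabla h_B(u)$ for $S_{A+sB}$-a.e.\ $u$, because $h_B$ is differentiable a.e. But that ``a.e.'' refers to spherical Lebesgue measure, and $S_{A+sB}=\sum_j\binom{n-1}{j}s^jS_{A[n-1-j],B[j]}$ is not absolutely continuous. For $n=2$, $A=B_2^2$, $B=[-1,1]^2$, one has $S_{A+sB}=S_A+sS_B$ with atoms at $\pm e_1,\pm e_2$. There $A+sB$ has edges of length $2s$ and $h_B$ is not differentiable, and this already happens at first order in $s$. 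More generally, for a polytope $B$, $S_{A[n-2],B}$ is carried by the normal cones of the edges of $B$, which is exactly where $\nabla h_B$ fails to exist.

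The conclusion survives with a different justification:
\begin{itemize}
\item Let $U$ be the set of $u$ for which $F(B,u)=\{x\in B:\langle x,u\rangle=h_B(u)\}$ is a single point. Then $S_A(\mathbb S^{n-1}\setminus U)=0$, since $S_A$ is absolutely continuous.
\item On $U$, the density of $S^\mu_{A+sB}$ with respect to $S_{A+sB}$ is $\rho(\nabla h_A+s\nabla h_B)$.
\item Off $U$, every point of the face $\nabla h_A(u)+sF(B,u)$ lies within $s\max_{y\in B}|y|$ of $\nabla h_A(u)$. So the density there is $\rho(\nabla h_A(u))+O(s)$, while $S_{A+sB}(\mathbb S^{n-1}\setminus U)=O(s)$.
\end{itemize}
Expanding the two pieces gives $g(s)=g(0)+s(n-1)\int h_C\,dS^\mu_{A;B}+o(s)$, and this uses only $\rho\in C^1$.

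Route~2 is consistent with the approximation the paper attributes to \cite{WBMTI}. However, the step you postpone is the substance of the proof and must actually be carried out. Take $B_k\in C^2_+$ and $g_k(s)=\mu(A+sB_k;C)$. Then $g_k''(s)$ is a sum of integrals of $h_C$ times $\nabla^2\rho$, $\nabla\rho$ or $\rho$ (evaluated in a fixed ball) and at most two factors of $\nabla h_{B_k}$, where $|\nabla h_{B_k}|\le\max_{x\in B_k}|x|$. The integrals are taken against $S_{A+sB_k}$, $S_{(A+sB_k)[n-2],B_k}$ and, for $n\ge3$, $S_{(A+sB_k)[n-3],B_k[2]}$, whose total masses are uniformly bounded. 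Hence $\sup_k\sup_{s\in[0,1]}|g_k''(s)|<\infty$; this is where $\rho\in C^2$ enters.

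Combine this with $g_k\to g$ pointwise, which follows from weak continuity of $K\mapsto S^\mu_K$ for continuous $\rho$. Arzel\`a--Ascoli then gives $g'(0^+)=\lim_k g_k'(0)$. That limit is identified by the facts you list. In particular, the gradient term converges by dominated convergence, because $\nabla h_{B_k}\to\nabla h_B$ at every point where $h_B$ is differentiable, hence $S_A$-a.e.
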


For the planar formulas below, set $u(\theta)=(\cos\theta,\sin\theta)$ and, for a support function $h_K$ on $\mathbb S^1$, write $h_K(\theta)=h_K(u(\theta))$; primes denote derivatives with respect to $\theta$ whenever they exist. If $K\in C_+^2$, write $f_K(\theta)=h_K(\theta)+h_K''(\theta)$ and $x_K(\theta)=\nabla h_K(u(\theta))$. 

It was shown in \cite[Section~2.5]{WBMTI} that, for the Gaussian measure on $\mathbb R^2$ with density $d\gamma_2(x)=\frac{1}{2\pi}e^{-|x|^2/2}\,dx$, the mixed measure $\gamma_2(A;B,C)$ is  given by 
\begin{equation}\label{eq:gaussian-planar}
\begin{aligned}
\gamma_2(A;B,C)
&=\frac{1}{2\pi}\int_0^{2\pi}
 e^{-\frac{h_A(\theta)^2+h_A'(\theta)^2}{2}}\\
&\quad\times\Big[h_B(\theta)h_C(\theta)
\big(1-h_A(\theta)f_A(\theta)\big)
-h'_B(\theta)h'_C(\theta)\Big] \,d\theta.
\end{aligned}
\end{equation}   
In Theorem~\ref{theorem_mu}, we generalize \eqref{eq:gaussian-planar} to planar log-concave measures of the form $d\mu(x)=e^{-\phi(\|x\|_L)}\,dx$.

\section{First-Order Mixed Measures}\label{twobodies}

Throughout Sections~\ref{twobodies} and~\ref{comparison}, let $L\subset\mathbb R^n$ be a convex body with $0\in\operatorname{int}L$, let $\phi:[0,\infty)\to[0,\infty)$ be nondecreasing, nonconstant, and convex, and assume that
\[
\int_{\mathbb R^n}e^{-\phi(\|x\|_L)}\,dx=1.
\]
We write
\[
d\mu(x)=e^{-\phi(\|x\|_L)}\,dx.
\]
We impose this normalization only for convenience; a fixed normalization factor may equivalently be absorbed into $\phi$ as an additive constant and does not affect any logarithmic asymptotic below.

We note that, whenever a quotient with denominator $\phi(ct)$, where $c>0$, occurs in a
finite-measure assertion below, it is understood for those $t>0$ for which
$\phi(ct)>0$. The omitted initial interval is bounded and therefore does not
affect the finite-measure conclusion.

We first establish a geometric large-deviation-type principle in the reference case in which the second body is the Euclidean ball $B_2^n$. This setting captures the essential asymptotic mechanism and provides the foundation for the extension to an arbitrary second body. Theorem~\ref{thm:first-mixed-ball} may be viewed as a geometric analogue of the large-deviation-type principle for surface area, since $\mu^+(\partial K)=\mu(K;B_2^n)$. The two lemmas below isolate, respectively, the geometric cap estimate and the one-dimensional analytic estimate used in the proof.
\begin{theorem} \label{thm:first-mixed-ball}
    Let $K\subset {\mathbb R}^n$ be a convex body containing the origin in its interior. Then
\[
\limsup_{t\to\infty}\frac{\ln\mu(tK;B_2^n)}{\phi(r(K,L)t)}=-1.
\]
Moreover, for every $\varepsilon>0$, the set
\[
\left\{t>0:
\left|\frac{\ln\mu(tK;B_2^n)}{\phi(r(K,L)t)}+1\right|\ge\varepsilon\right\}
\]
has finite Lebesgue measure.
\end{theorem}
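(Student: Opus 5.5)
We use \eqref{eq:first-variation} with $L$ replaced by $B_2^n$, so $h_{B_2^n}\equiv1$. Scaling $y=tz$ gives
\[
\mu(tK;B_2^n)=\int_{\partial(tK)}e^{-\phi(\|y\|_L)}\,d\mathcal H^{n-1}(y)
=t^{n-1}\int_{\partial K}e^{-\phi(t\|z\|_L)}\,d\mathcal H^{n-1}(z).
\]
Write $r=r(K,L)$. Since $rL\subset K$, every $z\in\partial K$ satisfies $\|z\|_L\ge r$. Since $\phi$ is nondecreasing, this gives the upper bound
\[
\mu(tK;B_2^n)\le t^{n-1}\mathcal H^{n-1}(\partial K)\,e^{-\phi(rt)}.
\]
Because $\phi$ is convex, nonconstant and nondecreasing, it grows at least linearly, so $\ln t=o(\phi(rt))$. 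Hence
\[
\limsup_{t\to\infty}\frac{\ln\mu(tK;B_2^n)}{\phi(rt)}\le-1,
\]
and in fact $\ln\mu/\phi(rt)\le -1+o(1)$ for all large $t$. So the upper half of the finite-measure statement holds for every $t$ beyond some $t_0$.

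For the lower bound we need a geometric cap lemma. Pick a contact point $z_0\in\partial K\cap r\partial L$. We want to show that for small $\delta>0$,
\[
\mathcal H^{n-1}\bigl(\partial K\cap(r+\delta)L\bigr)\ge c\,\delta^{N}
\]
for some constants $c>0$ and $N$ depending on $n$. To prove this, take a supporting hyperplane $H$ of $K$ at $z_0$; it also supports $rL$. Points of $\partial K$ near $z_0$ project radially onto a neighborhood of $z_0$ in $\partial(rL)$. Since $\|\cdot\|_L$ is Lipschitz, every $z\in\partial K$ with $|z-z_0|\le c'\delta$ has $\|z\|_L\le r+\delta$. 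The surface area of $\partial K\cap B(z_0,c'\delta)$ is at least of order $\delta^{n-1}$. This holds because near $z_0$ the boundary $\partial K$ is a Lipschitz graph over $H$, as $0\in\operatorname{int}K$. So $N=n-1$ works, and no smoothness of $K$ is needed.

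Inserting the cap lemma into the integral gives, for any fixed small $\delta$,
\[
\mu(tK;B_2^n)\ge c\,t^{n-1}\delta^{n-1}e^{-\phi((r+\delta)t)}.
\]
This is the one-dimensional Laplace-type step, and the main obstacle is to compare $\phi((r+\delta)t)$ with $\phi(rt)$. For the $\limsup$, convexity with $\phi(0)\ge0$ gives
\[
\phi(rt)\le\tfrac{r}{r+\delta}\,\phi((r+\delta)t),
\]
which goes the wrong way, so a pointwise bound is not available. Instead we use the fact that $\phi'$ is nondecreasing:
\[
\phi((r+\delta)t)-\phi(rt)\le\delta t\,\phi'((r+\delta)t).
\]
Growth of $\phi'$ can make this large relative to $\phi(rt)$, for example when $\phi$ grows like $e^{e^t}$. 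However, the set of $t$ on which $\phi'((r+\delta)t)\,t\ge\varepsilon\phi(rt)/\delta$ has finite measure. To see this, set $\psi=\ln\phi$. On that set $\psi$ grows by a definite amount over short intervals, and $\int\psi'$ is controlled on intervals where $\phi$ stays finite. A cleaner version of the same argument: choose $\delta=\delta(t)\to0$ slowly, apply the mean value theorem to $\ln\phi$, and use that
\[
\bigl\{t:\ln\phi((1+\eta)s)-\ln\phi(s)\ge\varepsilon'\bigr\},\qquad s=rt,
\]
has finite measure for a monotone function with the appropriate growth. This is the step I expect to need the most care. Along a sequence where $\phi'$ is tame relative to $\phi$, which such a sequence exists by the same measure argument, we get $\ln\mu/\phi(rt)\ge-1-\varepsilon$. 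Combined with the upper bound, this gives $\limsup=-1$. The exceptional set for the lower deviation has finite measure, and together with the bounded interval $[0,t_0]$ from the upper bound this gives the finite-measure claim.
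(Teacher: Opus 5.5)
Your upper bound is the paper's. Your cap estimate via the Lipschitz continuity of $\|\cdot\|_L$ is a legitimate and simpler substitute for Lemma~\ref{lemma:geometric-cap}. One small correction there: $\partial K$ need not be a graph over the supporting hyperplane $H$ when $K$ has a corner at $z_0$. Use a graph over $z_0^\perp$ instead, i.e.\ the radial structure coming from $0\in\operatorname{int}K$.

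The genuine gap is the one-dimensional step, which is the actual content of the theorem. The finite-measure claims you make there are false as stated. Take $\phi(s)=e^s$ (up to normalization) and any fixed $\delta>0$. Then $\delta t\,\phi'((r+\delta)t)/\phi(rt)=\delta t e^{\delta t}\to\infty$ and $\ln\phi((1+\eta)s)-\ln\phi(s)=\eta s\to\infty$, so both of your exceptional sets contain a full tail. Letting $\delta(t)\to0$ slowly, say $\delta=1/\ln t$, does not help, since still $\delta t\to\infty$. The remark that $\int\psi'$ is controlled is not an argument either, because $\int^\infty\psi'=\infty$. So nothing in the proposal shows $\ln\mu(tK;B_2^n)\ge-(1+\varepsilon)\phi(rt)$ outside a set of finite measure. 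Consequently neither $\limsup\ge-1$ nor the lower half of the exceptional-set assertion is proved. Remark~\ref{rem:nonlimit} shows that some genuinely measure-theoretic input is unavoidable here.

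Your route can be repaired, but $\delta(t)$ must tend to zero \emph{fast}, while keeping $\ln\delta(t)=o(\phi(rt))$ so that the prefactor $\delta^{n-1}$ does not affect the rate. One way:
\begin{itemize}
\item[(i)] Let $h(s)=\phi(s)^{-2}$, $\delta(t)=h(rt)/t$, and $E=\{s:\ln\phi(s+h(s))-\ln\phi(s)\ge\varepsilon'\}$ for large $s$.
\item[(ii)] Choose $s_1<s_2<\cdots$ greedily in $E$ with $s_{k+1}\ge s_k+h(s_k)$. These intervals $[s_k,s_k+h(s_k)]$ cover $E$, and monotonicity forces $\ln\phi(s_k)\ge\ln\phi(s_1)+(k-1)\varepsilon'$. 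Hence $|E|\le\sum_k h(s_k)<\infty$.
\item[(iii)] For $rt\notin E$, your cap bound gives $\ln\mu(tK;B_2^n)\ge-e^{\varepsilon'}\phi(rt)-O(\ln\phi(rt))$.
\end{itemize}

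The paper avoids choosing $\delta(t)$ altogether. It integrates the cap estimate over the whole range $s\in[R,P]$ via the layer-cake formula, giving $\mu(tK;B_2^n)\ge c_1F_n(t)$. On the bad set it rewrites $e^{-\phi(tR)}=-\frac{P-R}{R}F_1'(t)+\frac{P}{R}e^{-\phi(tP)}$. This yields $1\le -C\,F_1'(t)/F_1(t)^{1/\alpha}+(\text{an integrable term})$, whose right-hand side has finite integral. It then passes from $F_1$ to $F_n$ by controlling $\ln F_i/\ln F_{i-1}$ off sets of finite measure.
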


The proof of Theorem~\ref{thm:first-mixed-ball} uses the following two auxiliary lemmas. The first provides the geometric estimate near a contact point of $K$ and its largest centered copy of $L$.

\begin{lemma}[Geometric cap estimate]\label{lemma:geometric-cap}
Let $K,L\subset\mathbb R^n$ be convex bodies containing the origin in their interiors, and set $R=r(K,L)$. Then there exist constants $P>R$ and $c>0$ such that
\[
\mathcal H^{n-1}(\partial K\cap sL)
\ge c(s-R)^{n-1}
\qquad\text{for every }s\in[R,P].
\]
\end{lemma}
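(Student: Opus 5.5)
Fix a contact point $x_0\in\partial K\cap\partial(RL)$; it exists because $RL\subset K$ and $R$ is maximal, so $RL$ touches $\partial K$. The idea is to find a small $(n-1)$-dimensional piece of $\partial K$ near $x_0$ that lies inside $sL$ and has size linear in $s-R$. Since $\partial K$ near $x_0$ is a Lipschitz graph over a hyperplane, such a piece has $\mathcal H^{n-1}$-measure at least the measure of its projection, which will be a ball of radius comparable to $s-R$.

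\textbf{Step 1: separating hyperplane.} Since $RL\subset K$ and $x_0\in\partial K$, there is a supporting hyperplane $H=\{\langle x,v\rangle=h_K(v)\}$ of $K$ at $x_0$. Since $x_0\in RL\subset K$, the same $H$ supports $RL$ at $x_0$, so $h_K(v)=\langle x_0,v\rangle=Rh_L(v)$.

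\textbf{Step 2: $\partial K$ as a graph near $x_0$.} Use the radial map. Because $0\in\operatorname{int}K$, each ray from the origin meets $\partial K$ exactly once. Let $\rho_K(u)=1/\|u\|_K$, which is Lipschitz on $\mathbb S^{n-1}$. Write $u_0=x_0/|x_0|$ and take a spherical cap $U$ around $u_0$. The map $u\mapsto\rho_K(u)u$ is a bi-Lipschitz parametrization of $\partial K$ over $U$. Hence $\mathcal H^{n-1}$ of the image of a subcap of angular radius $\delta$ is at least $c_1\delta^{n-1}$, with $c_1$ depending on the Lipschitz constants and on $\min\rho_K>0$. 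This avoids any smoothness assumption on $K$.

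\textbf{Step 3: the subcap lies in $sL$.} A point $y=\rho_K(u)u\in\partial K$ lies in $sL$ if and only if $\|y\|_L\le s$, that is, $\rho_K(u)\|u\|_L\le s$. The function $g(u)=\rho_K(u)\|u\|_L=\|u\|_L/\|u\|_K$ satisfies $g\ge R$ everywhere, since $RL\subset K$ gives $\|u\|_K\le\|u\|_L/R$. At $u_0$ we have $g(u_0)=R$, because $x_0\in\partial K\cap\partial(RL)$. The function $g$ is Lipschitz on the sphere, being a quotient of Lipschitz functions with denominator bounded below; call its constant $\Lambda$. Then $g(u)\le R+\Lambda|u-u_0|$, so every $u$ with $|u-u_0|\le(s-R)/\Lambda$ gives a boundary point in $sL$.

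\textbf{Step 4: conclusion.} Choose $P>R$ so that $(P-R)/\Lambda$ is smaller than the radius of the cap $U$ from Step 2. For $s\in[R,P]$, Steps 2 and 3 give
\[
\mathcal H^{n-1}(\partial K\cap sL)\ge c_1\bigl((s-R)/\Lambda\bigr)^{n-1}=c(s-R)^{n-1}.
\]

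The only real care needed is the lower bound in Step 2: the Jacobian of the radial parametrization of $\partial K$ must be bounded below. I would establish this by noting that the radial projection $\partial K\to\mathbb S^{n-1}$, $y\mapsto y/|y|$, is Lipschitz with constant at most $1/\min_{\partial K}|y|$, because $|y/|y|-z/|z||\le 2|y-z|/\min(|y|,|z|)$. Lipschitz maps do not increase $\mathcal H^{n-1}$ by more than the constant to the power $n-1$. So the $\mathcal H^{n-1}$-measure of the boundary piece is at least $(\min_{\partial K}|y|)^{n-1}$ times the spherical measure of the subcap, which is of order $\delta^{n-1}$. This makes the Step 1 hyperplane unnecessary; the argument rests only on the Lipschitz bound for $g$ and this projection estimate.
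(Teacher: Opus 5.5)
Your argument is correct, but it takes a different route from the paper's.

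\textbf{The paper's route.} It fixes a contact point $w$ and the common supporting hyperplane $H$ of $K$ and $RL$ at $w$. It then builds an explicit cone $L'=\operatorname{conv}(\{w/R\}\cup\beta(K\cap\theta^\perp))\subset L$ and checks by direct computation that the slices $H\cap\alpha L'$ are homothetic with ratio $(\alpha-R)/(P-R)$. Finally, it projects orthogonally onto $\theta^\perp$ to show that $P_{\theta^\perp}(\partial K\cap\alpha L')$ contains the projection of that slice.

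\textbf{Your route.} You encode everything in the single function $g=\|\cdot\|_L/\|\cdot\|_K$ on $\mathbb S^{n-1}$, whose minimum is exactly $R$. Lipschitz continuity of $g$ yields a spherical cap of radius $(s-R)/\Lambda$ whose radial image lies in $\partial K\cap sL$. The Lipschitz bound for the radial projection $\partial K\to\mathbb S^{n-1}$ then transfers the cap's spherical measure to $\mathcal H^{n-1}$ on $\partial K$.

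\textbf{Comparison.} Your version is shorter and gets the contact point for free from compactness of the sphere. It makes Step~1 and the bi-Lipschitz discussion in Step~2 superfluous. It also uses convexity only through the Lipschitz property of the gauges, so it would work verbatim for star bodies with Lipschitz radial functions. The paper's construction is more hands-on and gives an explicit constant in terms of $\mathcal H^{n-1}(H_P)$ and the angle $\langle\theta,\nu\rangle$.

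\textbf{Two cosmetic points.} First, on $\{|y|\ge m\}$ the map $y\mapsto y/|y|$ does have Lipschitz constant $1/m$, since it is $1/m$ times the metric projection onto $mB_2^n$. The inequality you quote, however, only gives $2/m$; either constant suffices. Second, if $K=RL$ then $g$ is constant, so you should take $\Lambda$ to be any positive Lipschitz bound rather than the optimal constant, to avoid dividing by zero.
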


\begin{proof}
By the maximality of $R$ and compactness, $\partial K\cap\partial(RL)\ne\varnothing$. Choose
$w\in\partial K\cap\partial(RL)$. Since $RL\subset K$, there is a supporting hyperplane common to $K$ and $RL$ at $w$; let $\nu$ be an outer unit normal to this hyperplane. Thus
\[
H=\{x:\langle x,\nu\rangle=h_K(\nu)=Rh_L(\nu)\}.
\]
Set $\theta=w/|w|$. Since $0\in\operatorname{int}K$,
$\langle\theta,\nu\rangle=h_K(\nu)/|w|>0$.

Choose $\beta>0$ so small that $\beta\le 1/(2R)$ and
$\beta(K\cap\theta^\perp)\subset\frac12(L\cap\theta^\perp)$, and set
\[
L'=\operatorname{conv}\bigl(\{w/R\}\cup\beta(K\cap\theta^\perp)\bigr).
\]
Then $L'\subset L$ and $RL'\subset RL\subset K$. Put $P=1/\beta\ge2R$; the base of $PL'$ is $K\cap\theta^\perp$.
 For $\alpha\in[R,P]$, set
\[
H_\alpha=H\cap\alpha L',\qquad
\Gamma_\alpha=\partial K\cap\alpha L'.
\]
\begin{figure}[H]
\centering
\begin{asy}
size(12cm);
pair A = (-2,0);
pair B = (3,0);
pair C = (0,2);
draw((-8,0)--(11,0), dashed);
draw((-6,0)--(0,6)--(9,0)--cycle, black);
draw((-3,0)--(0,3)--(4.5,0)--cycle, red);
draw((-2,0)--B--C--cycle, blue);
draw((0,0)--(0,2), blue+dashed);
draw((0,2)--(0,3), red+dashed);
draw((0,3)--(0,7.5), dashed, Arrow);
draw((-6,0) .. (0,2) .. (9,0), linewidth(1.2)+green);
draw((-4.5,1.5)--(36/7, 18/7), linewidth(1.2)+black);
draw((-9/8,15/8)--(9/7, 45/21), linewidth(1.2)+red);
dot((0,2),blue);
dot((0,3), red);
dot((0,6));
label("\tiny $R\beta(K\cap \theta^\perp)$", (0,0),S, blue);
label("\tiny $K\cap \theta^\perp$", (7,0),S);
label("\tiny $\alpha\beta(K\cap \theta^\perp)$", (4,0),S, red);
label("\tiny $\theta^\perp$", (10.5,0),S);
label("\tiny $\partial K$", (6,1.4),S, green);
label("\tiny $RL'$", (1.5,1),S, blue);
label("\tiny $w$", (-1/3,2.5),S,red);
label("\tiny $\frac{\alpha}{R}w$", (-1/6,3.5),SW,red);
label("\tiny $\frac{P}{R}w$", (-1/6,6),W,red);
label("\tiny $H_P$", (3,2.7));
label("\tiny $H_\alpha$", (0.56,2.4),red);
label("\tiny $PL'$", (2,4.5),S);
label("\tiny $\alpha L'$", (1.5,1.9),S,red);
label("\tiny $\vec{\theta}$", (0,7),E);
\end{asy}
\caption{ Intersections
of the cones $\alpha L'$ and $P L'$ with the supporting hyperplane $H$ and
the boundary of $K$.}
\label{levelset}
\end{figure}
 The slices $H_\alpha$ and $H_P$ are homothetic with center $w$ and ratio $(\alpha-R)/(P-R)$. To verify equality rather than only one inclusion, put $v=w/R$ and $B=\beta(K\cap\theta^\perp)$. Since $\langle b,\nu\rangle\le\beta h_K(\nu)\le h_L(\nu)/2$ for $b\in B$, write a point of $\alpha L'=\operatorname{conv}(\{\alpha v\}\cup\alpha B)$ as $x=\alpha[(1-s)v+sb]$. The condition $x\in H$ gives
\[
s=\frac{(\alpha-R)h_L(\nu)}{\alpha[h_L(\nu)-\langle b,\nu\rangle]}.
\]
Indeed, $h_L(\nu)-\langle b,\nu\rangle\ge(1-\beta R)h_L(\nu)$ and $\alpha\beta\le1$, so the preceding expression belongs to $[0,1]$. Consequently,
\[
H_\alpha=w+(\alpha-R)D,
\qquad
D=\left\{v+\frac{h_L(\nu)}{h_L(\nu)-\langle b,\nu\rangle}(b-v):b\in B\right\}.
\]
The same formula with $\alpha=P$ gives $H_P=w+(P-R)D$, proving the asserted homothety. Hence
\[
\mathcal H^{n-1}(H_\alpha)
=\left(\frac{\alpha-R}{P-R}\right)^{n-1}
\mathcal H^{n-1}(H_P).
\]
For $x\in H_\alpha$, let $z=P_{\theta^\perp}x$, the point $z$ belongs to the base of $\alpha L'$ and, since $\alpha\le P$, also to $K\cap\theta^\perp$. Moreover, $[x,z]\subset\alpha L'$. Thus the segment $[x,z]$ meets $\Gamma_\alpha$, and consequently
$P_{\theta^\perp}H_\alpha\subset P_{\theta^\perp}\Gamma_\alpha$.
Since orthogonal projection is $1$-Lipschitz,
\begin{align}
\mathcal H^{n-1}(\Gamma_\alpha)
&\ge\mathcal H^{n-1}(P_{\theta^\perp}\Gamma_\alpha)
\ge\mathcal H^{n-1}(P_{\theta^\perp}H_\alpha)\nonumber\\
&=\left(\frac{\alpha-R}{P-R}\right)^{n-1}
\mathcal H^{n-1}(H_P)|\langle\theta, \nu\rangle|.
\label{eq:cap-projection}
\end{align}
\[
\mathcal H^{n-1}(\partial K\cap\alpha L)
\ge \mathcal H^{n-1}(\Gamma_\alpha)
\ge
\frac{\mathcal H^{n-1}(H_P)|\langle\theta,\nu\rangle|}
{(P-R)^{n-1}}(\alpha-R)^{n-1}.
\]
This proves the lemma.
\end{proof}

The second auxiliary lemma is the one-dimensional Laplace-type estimate used after the geometric reduction.

\begin{lemma}[One-dimensional Laplace estimate]\label{lemma:laplace-integrals}
Let $0<R<P$, and for every positive integer $m$ define
\begin{equation}\label{Fm}
F_m(t)=\int_{tR}^{tP}
\frac{(y-tR)^{m-1}}{(P-R)^m}e^{-\phi(y)}\,dy.
\end{equation}
Then
\[
\limsup_{t\to\infty}\frac{\ln F_m(t)}{\phi(Rt)}\ge-1.
\]
Moreover, for every $\alpha>1$, the set
\[
\left\{t>0:\frac{\ln F_m(t)}{\phi(Rt)}\le-\alpha\right\}
\]
has finite Lebesgue measure.
\end{lemma}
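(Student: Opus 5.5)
The plan is to bound $F_m(t)$ from below by the contribution of a short window $[tR,tR+h]$ at the left end of the range of integration. The window length $h=h(t)$ will be chosen adaptively so that $\phi$ increases by at most $1$ across the window. On such a window $e^{-\phi(y)}\ge e^{-\phi(Rt)-1}$. Restricting further to $y\in[tR+h/2,tR+h]$, where $(y-tR)^{m-1}\ge (h/2)^{m-1}$, gives
\[
F_m(t)\ge \frac{h^m}{2^m(P-R)^m}\,e^{-\phi(Rt)-1}.
\]
This requires $h\le t(P-R)$, which holds for large $t$ once $h\le 1$. Hence
\[
\frac{\ln F_m(t)}{\phi(Rt)}\ge -1-\frac{1+m\ln(2(P-R))+m\ln(1/h)}{\phi(Rt)}.
\]
Since $\phi$ is convex, nondecreasing and nonconstant, $\phi(Rt)\to\infty$. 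The whole question is therefore how small $h$ must be relative to $e^{-\phi(Rt)}$.

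Fix $\delta>0$ and set $h(s)=e^{-\delta\phi(s)}$ for $s=Rt$. Call $s$ \emph{bad} if $\phi(s+h(s))-\phi(s)>1$. At good points the bound above gives $\ln F_m(t)/\phi(Rt)\ge -1-m\delta-o(1)$. Given $\alpha>1$, I choose $\delta$ with $m\delta<(\alpha-1)/2$. Then outside a bounded initial interval, the set $\{t:\ln F_m(t)/\phi(Rt)\le-\alpha\}$ is contained in $\{t: Rt\text{ is bad}\}$. It therefore suffices to show that the bad set has finite Lebesgue measure; rescaling by $1/R$ preserves finiteness.

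This is the main obstacle, because no growth condition on $\phi$ is assumed: for very fast growing $\phi$, any fixed window would fail on an unbounded set. I will use a greedy covering argument. Let $s_1$ be (essentially) the infimum of the bad points beyond some $s_0$, and cover by $I_1=[s_1,s_1+h(s_1)]$. Then let $s_2$ be the next bad point beyond $I_1$, and so on; an $\eta$-perturbation handles the case where the infimum is not attained. The intervals $I_k$ cover the bad set and are disjoint. Since $\phi$ is nondecreasing and increases by more than $1$ across each $I_k$, we get $\phi(s_{k+1})\ge\phi(s_k)+1$, hence $\phi(s_k)\ge k-1+\phi(s_1)$. Consequently the measure of the bad set is at most
\[
\sum_k h(s_k)\le\sum_k e^{-\delta(k-1)}<\infty.
\]
This proves the finite-measure statement.

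Finally, the $\limsup$ bound follows from the finite-measure statement. For every $\alpha>1$, the set $\{t:\ln F_m(t)/\phi(Rt)>-\alpha\}$ has infinite measure and so is unbounded. Hence $\limsup_{t\to\infty}\ln F_m(t)/\phi(Rt)\ge-\alpha$, and letting $\alpha\downarrow1$ gives $\limsup\ge-1$.
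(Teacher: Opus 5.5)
Your proof is correct, and it takes a genuinely different route from the paper's. The paper argues through a differential inequality. For $m=1$ it uses the explicit formula for $F_1'$ to show that on the exceptional set $1\le -c\,F_1'(t)/F_1(t)^{1/\alpha}+(\text{an integrable term})$; the first term has finite integral because $F_1$ is eventually decreasing, so $\int_T^\infty -F_1'/F_1^{1/\alpha}\,dt<\infty$. For $m\ge2$ the same device shows that $-\ln F_i/(-\ln F_{i-1})>\alpha$ only on a set of finite measure. The ratios are then chained through the product $X(t)Y(t)$, with a splitting of the exceptional set. You instead bound $F_m(t)$ from below directly on a window $[tR,tR+h]$ of adaptive length $h=e^{-\delta\phi(Rt)}$, which costs only a factor $e^{-m\delta\phi(Rt)}$. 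You control the points where the window fails by a greedy covering: each new bad window forces $\phi$ up by at least $1$, so the window lengths are dominated by a geometric series. Your route treats all $m$ at once, with no induction and no product-of-ratios step. It needs neither eventual monotonicity of $F_m$ nor the linear-growth integrability estimates; convexity enters only through continuity and $\phi\to\infty$. It uses the right endpoint $tP$ only to fit the window. It also yields a quantitative tail bound: beyond $s_0$ the bad set has measure at most $e^{-\delta\phi(s_0)}/(1-e^{-\delta})$. The paper's route, in exchange, is uniform with the rest of the paper: the same integrability of $-F'/F^{1/\alpha}$ is what drives the complement estimate in Lemma~\ref{lemmaLZ}. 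One minor simplification: if you call $s$ bad when $\phi(s+h(s))-\phi(s)\ge1$, the bad set is closed because $\phi$ is continuous. Then the infima in the greedy construction are attained and the $\eta$-perturbation is unnecessary.
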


\begin{proof} First observe that convexity, nonconstancy, and monotonicity imply that $\phi(s)\ge as-b$ for some $a>0$, some $b\in\mathbb R$, and all sufficiently large $s$. Consequently, for every $c>0$,
\begin{equation}\label{eq:lim}
\lim_{t\to\infty}\frac{\ln t}{\phi(ct)}=0.
\end{equation}

We first note that every $F_i$ tends to zero and is eventually decreasing. Indeed,
$F_i(t)\le t^i e^{-\phi(tR)}/i\to0$.  For $i=1$,
\[
F_1'(t)=\frac{Pe^{-\phi(tP)}-Re^{-\phi(tR)}}{P-R},
\]
whereas, for $i\ge2$,
\begin{equation}\label{derivin}
F_i^{\prime}(t)=\frac{P}{P-R}t^{i-1}e^{-\phi(tP)}
-\frac{R(i-1)}{P-R}\int_{tR}^{tP}
\frac{(y-tR)^{i-2}}{(P-R)^{i-1}}e^{-\phi(y)}\,dy.
\end{equation}
For $i=1$, $\phi(tP)-\phi(tR)\to\infty$ gives $F_1'(t)<0$ for large $t$. For $i\ge2$, choose
$R<Q_1<Q_2<P$. Integration over $[tQ_1,tQ_2]$ gives
$F_{i-1}(t)\ge c_i t^{i-1}e^{-\phi(tQ_2)}$. In the derivative formula \eqref{derivin}, this term dominates the endpoint term $t^{i-1}e^{-\phi(tP)}$, because
$\phi(tP)-\phi(tQ_2)\to\infty$. Hence $F_i'(t)<0$ for all sufficiently large $t$.
 Increasing $T$ if necessary, we may also assume that $0<F_i(t)<1$ for every $t\ge T$ and every index under consideration. Consequently, for every $\alpha>1$,
\[
0\le\int_E-\frac{F_i'(t)}{F_i(t)^{1/\alpha}}\,dt
\le\int_T^\infty-\frac{F_i'(t)}{F_i(t)^{1/\alpha}}\,dt
=\frac{\alpha}{\alpha-1}F_i(T)^{1-1/\alpha}<\infty
\]
for every measurable $E\subset[T,\infty)$.

Consider the base case $m=1$. Since
\[
F_1(t)=\frac{1}{P-R}\int_{tR}^{tP}e^{-\phi(y)}\,dy,
\]
the monotonicity of $\phi$ gives $e^{-\phi(y)}\le e^{-\phi(tR)}$ for every $y\ge tR$, and hence
\begin{equation}\label{ineqf1}F_1 (t)\le \frac{1}{P-R}\int_{tR}^{tP} e^{-\phi (tR)}\,dy= te^{-\phi (tR)}.
\end{equation}
Consequently, using \eqref{eq:lim}, we obtain
\begin{equation} \label{F1}
    \limsup_{t\to\infty} \frac{ \ln F_1 (t) }{\phi (Rt)} \le \limsup_{t\to\infty} \frac{ \ln{t}-\phi (Rt)} {\phi (Rt)}=-1.
\end{equation}
      To obtain the reverse inequality, suppose, toward a contradiction, that there exists $\alpha >1$ such that 
    \[
\limsup_{t\to\infty} \frac{\ln F_1 (t)}{\phi (Rt)} < -\alpha.
\] Thus, there exists $t_0>0$ such that for all $t >t_0$ we have
    \begin{equation} \label{cond1}
        F_1(t)^{\frac{1}{\alpha}} \le e^{-\phi(tR)}.
    \end{equation}
Note that 
\[
F_1^{\prime} (t)= \frac{P e^{-\phi (tP)}-R e^{-\phi (tR)}}{P-R},
\] 
thus
\begin{equation}\label{eq:derivative}
e^{-\phi (tR)}=-\frac{P-R}{R} F_1^{\prime}(t) +\frac{P}{R} e^{-\phi (tP)}.
\end{equation}
Substituting \eqref{eq:derivative} into \eqref{cond1}, we obtain
\begin{equation} \label{cond2}
   1 \le -\frac{P-R}{R} \frac{F_1^{\prime}(t)}{F_1(t)^{\frac{1}{\alpha}}} + \frac{P}{R} \frac{e^ {-\phi(tP)}}{F_1(t)^{\frac{1}{\alpha}}}. 
\end{equation}
Using the monotonicity of $\phi$ and inequality \eqref{ineqf1}, we also have
\[
F_1(t) \ge te^{-\phi (tP)}.
\]
Thus, 
  \begin{equation}\label{lower} \frac{e^{-\phi (tP)}}{F_1(t)^{\frac{1}{\alpha}}} \le t^{\frac{-1}{\alpha}} e^ {-(1-\frac{1}{\alpha}) \phi(tP) }.\end{equation}
Substituting \eqref{lower} into \eqref{cond2} gives, for every $t>t_0$,
   \begin{equation}\label{ineqder}
   1 \le -\frac{P-R}{R} \frac{F_1^{\prime} (t)}{F_1(t)^{\frac{1}{\alpha}}} + \frac{P}{R} t^{\frac{-1}{\alpha}} e^ {-(1-\frac{1}{\alpha}) \phi(tP) }.
   \end{equation}
Since $\phi$ is convex and nonconstant, the second term on the right-hand side of \eqref{ineqder} is integrable on $[t_0,\infty)$. The preceding eventual-monotonicity estimate shows that the integral of the first term is finite as well. Integrating \eqref{ineqder} over $[t_0,\infty)$ therefore gives a contradiction. Thus the assumption was false, and together with \eqref{F1} we conclude
\begin{equation} \label{eq:F1-rate}
    \limsup_{t\to\infty} \frac{ \ln F_1 (t) }{\phi (Rt)}=-1.
\end{equation}
The preceding calculation also gives a stronger property. Fix $\alpha>1$ and consider the set
 \begin{equation}\label{omega1}
 \Omega_1=\left\{t\ge T:\frac{\ln F_1(t)}{\phi(Rt)}\le-\alpha\right\}.
 \end{equation}
Then $|\Omega_1|<\infty$. Indeed, after discarding a bounded initial interval, \eqref{ineqder} holds on $\Omega_1$; its two right-hand terms have finite integral there by the preceding estimate, whereas the integral of the left-hand side equals the measure of $\Omega_1$.

If $m=1$, \eqref{eq:F1-rate} and the estimate for $\Omega_1$ prove both conclusions of the lemma. Henceforth, assume $m\ge2$.

Next, we show that, for every $i\ge2$,
\begin{equation} \label{eq:Fi-ratio}
    \liminf_{t\to \infty} \frac{\ln F_i(t)}{\ln F_{i-1}(t)}=1.
\end{equation}
 For sufficiently large $t,$ both $\ln F_i(t)$ and $\ln F_{i-1}(t)$ are negative. It is therefore more convenient to work with their negatives and to prove the equivalent statement 
 \[
\liminf_{t\to \infty} \frac{-\ln F_i(t)}{-\ln F_{i-1}(t)}=1.
\]
Since $\phi$ is convex and nonconstant, there exists $a>0$ such that $\phi'(y)\ge a$ for almost every sufficiently large $y$. Applying integration by parts to \eqref{Fm}, we obtain
 \[
F_{i-1}(t)\ge \frac{t^{i-1}}{i-1} e^{-\phi(tP)}+ \frac{a (P-R)}{i-1}\int_{tR}^{tP}  \frac{(y - tR)^{i-1}}{(P-R)^i} e^{-\phi (y)} \, dy.
\]
 Hence, \[
-\ln F_{i-1}(t) \le -\ln \frac{a (P-R)}{i-1} -\ln F_i(t).
\]
 It follows that \[
 \liminf_{t\to \infty} \frac{-\ln F_i(t)}{-\ln F_{i-1}(t)}\ge  \liminf_{t\to \infty} \frac{\ln \frac{a (P-R)}{i-1} -\ln F_{i-1}(t)}{-\ln F_{i-1}(t)}\ge 1,
\]
where the last inequality follows from $\ln F_{i-1}(t)\to-\infty$. This gives the lower bound in \eqref{eq:Fi-ratio}.

For the upper bound, use the derivative identity \eqref{derivin}.
 Assume, toward a contradiction, that there exists $\alpha >1$ such that 
 \[
 \liminf_{t\to \infty} \frac{-\ln F_i(t)}{-\ln F_{i-1}(t) } >\alpha.
\]
Equivalently, using \eqref{derivin}, we obtain
 \[
 \liminf_{t\to \infty} \frac{-\ln F_i(t)}{-\ln \left[ -\frac{P-R}{R(i-1)}F_i^{\prime}(t) +\frac{P t^{i-1}}{R(i-1)} e^{-\phi(tP)}\right]} >\alpha.
\]
 Then there exists $t_0>0$ such that for all $t>t_0,$ 
 \[
-\ln F_i^{\frac{1}{\alpha}}(t)>-\ln \left[ -\frac{P-R}{R(i-1)}F_i^{\prime}(t) +\frac{P t^{i-1}}{R(i-1)} e^{-\phi(tP)}\right],
\]
or
 \begin{equation} \label{unbd}
     1< -\frac{P-R}{R(i-1)} \frac{F_i^{\prime}(t)}{F_i^{\frac{1}{\alpha}}(t)} +\frac{P t^{i-1}}{R(i-1)} \frac{e^{-\phi(tP)}}{F_i^{\frac{1}{\alpha}}(t)}.
 \end{equation}
On the other hand, by monotonicity of $\phi$ we have
\[
F_i(t)\ge e^{-\phi(tP)}\int_{tR}^{tP}  \frac{(y - tR)^{i-1}}{(P-R)^i}\,dy = \frac{t^i}{i} e^{-\phi(tP)},
\] so
\begin{equation}\label{bound}
\frac{e^{-\phi(tP)}}{F_i^{\frac{1}{\alpha}}(t)}\le i^{\frac{1}{\alpha}} t^{\frac{-i}{\alpha}} e^{-(1-\frac{1}{\alpha}) \phi(tP)}.
\end{equation}
Combining \eqref{bound} and \eqref{unbd}, we obtain 
 \begin{equation}\label{ineq2}1< -\frac{P-R}{R(i-1)} \frac{F_i^{\prime}(t)}{F_i^{\frac{1}{\alpha}}(t)} +\frac{P}{R(i-1)} i^{\frac{1}{\alpha}} t^{i(1-\frac{1}{\alpha})-1} e^{-(1-\frac{1}{\alpha}) \phi(tP)}.\end{equation}
Since $1-1/\alpha>0$ and $\phi$ grows at least linearly, the second term on the right-hand side of \eqref{ineq2} is integrable. The first term has finite integral by eventual monotonicity. Integrating \eqref{ineq2} over $[t_0,\infty)$ gives a contradiction and establishes \eqref{eq:Fi-ratio} for every $i\ge2$.

As in \eqref{omega1}, the preceding argument yields a stronger property. Fix $\alpha>1$ and consider the set
 \begin{equation}\label{omega}
 \Omega_i=\left\{t\ge T:\frac{-\ln F_i(t)}{-\ln F_{i-1}(t)}>\alpha\right\}.
 \end{equation}
Then $|\Omega_i|<\infty$ for $i=2,3,\ldots$. Indeed, integrate \eqref{ineq2} over $\Omega_i$ after removing a bounded initial interval and use the preceding integrability estimate.

 To complete the proof of the lemma, we now combine \eqref{eq:F1-rate} and \eqref{eq:Fi-ratio}. The argument is adapted from \cite[Claim 2]{LZ}. For the sake of completeness, we reproduce it here.  Set 
\[
Y(t)=\frac{\ln F_1(t)}{\phi(Rt)}\quad \text{  and  } \quad X_i (t)=\frac{\ln F_i(t)}{ \ln F_{i-1}(t)}, \hspace{.5 cm} i = 2,\dots,\,m.
\]
By \eqref{eq:Fi-ratio} we obtain  $\liminf\limits_{t\to{\infty}} X_{i}(t)=1,$ for all $ i=2,\dots, m,$ and from \eqref{eq:F1-rate} we have
  $\limsup\limits_{t\to\infty}Y(t)= -1.$ 
 Define $X(t)=\prod_{i=2}^{m}X_{i}(t)$. Then the telescoping identity 
 $\ln{F_m}(t)=X(t) \ln{F_1(t)}$ implies \[
 \frac{\ln{F_m(t)}}{\phi (Rt)}=X(t)Y(t).
\]
 Hence, it suffices to prove
 $\limsup_{t\to\infty}X(t)Y(t) \ge -1.$
 Assume, to the contrary, that this is not the case. Then there exists $\alpha >1$ such that 
\[
\limsup_{t\to\infty}X(t)Y(t)<{-\alpha}<-1.
\]
Therefore, there exists $t_0>0$ such that
\begin{equation}\label{assumption} X(t)Y(t)<-\alpha \quad\text{for all } t>t_0.
\end{equation}
Using \eqref{eq:Fi-ratio} we may also assume that $X_i(t)>0$ for all $t>t_0.$ Next, consider the set  \[
A:=\left\{t>t_0:X(t)>\frac{\alpha+1}{2}\right\}.
\]

We claim that $|A|<\infty$. There are $m-1$ factors in $X(t)$. Thus, if
$X(t)>(\alpha+1)/2$, then for at least one $i\in\{2,\ldots,m\}$,
\[
X_i(t)>\left(\frac{\alpha+1}{2}\right)^{1/(m-1)}.
\]
Consequently,
\[
A\subset\bigcup_{i=2}^m
\left\{t:X_i(t)>\left(\frac{\alpha+1}{2}\right)^{1/(m-1)}\right\}.
\]
The threshold is strictly greater than $1$, so every set in this finite union has finite measure by \eqref{omega}. Hence $|A|<\infty$.
 Similarly, using that    $\frac{2\alpha}{\alpha+1}>1$  and \eqref{omega1} we also have 
\[
\left|\left\{t: Y(t) < -\frac{2 \alpha}{\alpha+1}\right\}\right|<\infty.
\]
Finally, since $X(t)>0$ for all sufficiently large $t$, we obtain
\begin{align}\label{measuretrick}
\Big|\Big\{t> t_0:   &  X(t)Y(t) < -\alpha \Big\}\Big| =\left|\left\{t> t_0: Y(t) < -\frac{\alpha}{X(t)}\right\}\right| \nonumber\\
&\le |A|+
\left|\left\{t> t_0: Y(t) < -\frac{\alpha}{X(t)} \text{  and } X(t)\le\frac{\alpha+1}{2}\right\}\right|\nonumber\\
&\le |A|+
\left|\left\{t: Y(t) < -\frac{2 \alpha}{\alpha+1}\right\}\right|<\infty,
\end{align}
which contradicts \eqref{assumption}. For the non-strict exceptional-set assertion, fix $\alpha>1$. For all sufficiently large $t$, $X(t)>0$, and
\[
\{t:X(t)Y(t)\le-\alpha\}
\subset
\left\{t:X(t)>\frac{\alpha+1}{2}\right\}
\cup
\left\{t:Y(t)\le-\frac{2\alpha}{\alpha+1}\right\}.
\]
The first set has finite measure by \eqref{omega} and the finite-product argument above, and the second has finite measure by \eqref{omega1}. Thus
\[
\left\{t>0:\frac{\ln F_m(t)}{\phi(Rt)}\le-\alpha\right\}
\]
has finite measure. Therefore
\[
\limsup_{t\to\infty}X(t)Y(t)\ge-1,
\qquad
\limsup_{t\to\infty}\frac{\ln F_m(t)}{\phi(Rt)}\ge-1.
\]
\end{proof}

\begin{proof}[Proof of Theorem~\ref{thm:first-mixed-ball}]
Set $R=r(K,L)$. A convex nonconstant nondecreasing function can be constant only on an initial interval. For $u\ge\phi(0)$, define its generalized inverse by
\[
\phi^{\leftarrow}(u)=\sup\{s\ge0:\phi(s)\le u\}.
\]
 For every $s\ge0$, the generalized inverse gives the layer-cake identity
\[
e^{-\phi(s)}
=\int_{\phi(s)}^\infty e^{-u}\,du
=\int_{\phi(0)}^\infty e^{-u}
\mathbf 1_{\{s\le\phi^{\leftarrow}(u)\}}\,du.
\]
By \eqref{eq:first-variation}, Tonelli's theorem, and this identity,
\begin{align}
\mu(tK;B_2^n)
&=\int_{\partial(tK)}e^{-\phi(\|y\|_L)}\,d\mathcal H^{n-1}(y)\nonumber\\
&=\int_{\phi(0)}^\infty e^{-u}\,
\mathcal H^{n-1}\!\left(\partial(tK)\cap\phi^{\leftarrow}(u)L\right)du.
\label{inform}
\end{align}
Since $RL\subset K$, the intersection in \eqref{inform} is empty whenever
$\phi^{\leftarrow}(u)<tR$. Consequently,
\begin{align}
\mu(tK;B_2^n)
&=\int_{\phi(tR)}^\infty e^{-u}\,
\mathcal H^{n-1}\!\left(\partial(tK)\cap\phi^{\leftarrow}(u)L\right)du\nonumber\\
&\le t^{n-1}\mathcal H^{n-1}(\partial K)e^{-\phi(tR)}.
\label{negativeln}
\end{align}
The elementary estimate \eqref{eq:lim} was established before its first use in Lemma~\ref{lemma:laplace-integrals}. It follows that
\[
\limsup_{t\to\infty}
\frac{\ln\mu(tK;B_2^n)}{\phi(tR)}\le-1.
\]
In addition, for every $\varepsilon>0$, the set on which
\[
\frac{\ln\mu(tK;B_2^n)}{\phi(tR)}\ge-1+\varepsilon
\]
is bounded.

We now prove the reverse estimate. Let $P>R$ and $c>0$ be the constants supplied by Lemma~\ref{lemma:geometric-cap}. For all sufficiently large $t$ and
$u\in[\phi(tR),\phi(tP)]$, one has
$\phi^{\leftarrow}(u)/t\in[R,P]$. Hence \eqref{inform} and
Lemma~\ref{lemma:geometric-cap} give
\begin{align*}
\mu(tK;B_2^n)
&\ge
t^{n-1}\int_{\phi(tR)}^{\phi(tP)}
e^{-u}\,
\mathcal H^{n-1}\!\left(
\partial K\cap\frac{\phi^{\leftarrow}(u)}tL
\right)du\\
&\ge
c\,t^{n-1}\int_{\phi(tR)}^{\phi(tP)}
e^{-u}
\left(\frac{\phi^{\leftarrow}(u)}t-R\right)^{n-1}du.
\end{align*}
For all sufficiently large $t$, the function $\phi$ is strictly increasing on
$[tR,tP]$. Since it is locally absolutely continuous and differentiable almost
everywhere, the one-dimensional change-of-variables formula yields
\[
\mu(tK;B_2^n)
\ge c\int_{tR}^{tP}
(x-tR)^{n-1}e^{-\phi(x)}\phi'(x)\,dx.
\]
There exist $a>0$ and $T>0$ such that $\phi'(x)\ge a$ for almost every
$x\ge T$. Therefore, for all sufficiently large $t$,
\begin{equation}\label{eq:lower-Fn}
\mu(tK;B_2^n)\ge c_1 F_n(t),
\end{equation}
where $F_n$ is defined in \eqref{Fm} and $c_1>0$ is independent of $t$.
Lemma~\ref{lemma:laplace-integrals} now gives
\[
\limsup_{t\to\infty}
\frac{\ln\mu(tK;B_2^n)}{\phi(tR)}\ge-1.
\]
Together with the upper estimate, this proves the first assertion.

Finally, fix $\varepsilon>0$. The upper-tail set is bounded by the estimate
following \eqref{eq:lim}. Moreover, \eqref{eq:lower-Fn} gives, for all
sufficiently large $t$,
\[
\frac{\ln\mu(tK;B_2^n)}{\phi(Rt)}
\ge
\frac{\ln F_n(t)}{\phi(Rt)}
-\frac{C}{\phi(Rt)}.
\]
Thus, up to a bounded set, the lower-tail set on which the left-hand side is at
most $-1-\varepsilon$ is contained in
\[
\left\{t:
\frac{\ln F_n(t)}{\phi(Rt)}
\le-1-\frac{\varepsilon}{2}\right\},
\]
which has finite measure by Lemma~\ref{lemma:laplace-integrals}. This proves
the exceptional-set assertion.
\end{proof}
We now extend the geometric large-deviation-type principle established in Theorem~\ref{thm:first-mixed-ball} for $B_2^n$ to an arbitrary convex body $M$ containing the origin in its interior. The key observation is that the support function $h_M:\mathbb S^{n-1}\to\mathbb R$ is bounded above and below by positive constants. This allows us to compare $\mu(tK;M)$ with $\mu(tK;B_2^n)$ and thereby transfer the asymptotic behavior obtained in the reference case.

\begin{theorem} \label{GLDP1}
Let $K,M\subset\mathbb R^n$ be convex bodies with $0\in\operatorname{int}K\cap\operatorname{int}M$. Then
\[
\limsup_{t\to\infty}\frac{\ln\mu(tK;M)}{\phi(r(K,L)t)}=-1.
\]
Moreover, for every $\varepsilon>0$, the set
\[
\left\{t>0:
\left|\frac{\ln\mu(tK;M)}{\phi(r(K,L)t)}+1\right|\ge\varepsilon\right\}
\]
has finite Lebesgue measure.
\end{theorem}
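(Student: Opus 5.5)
The plan is to reduce to Theorem~\ref{thm:first-mixed-ball} by a two-sided comparison of $\mu(tK;M)$ with $\mu(tK;B_2^n)$, uniformly in $t$. Since $0\in\operatorname{int}M$ and $M$ is compact, there are constants $0<m_1\le m_2<\infty$ with $m_1B_2^n\subset M\subset m_2B_2^n$. Equivalently,
\[
m_1\le h_M(u)\le m_2\qquad\text{for all }u\in\mathbb S^{n-1}.
\]

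Apply the first variation formula \eqref{eq:first-variation} to the convex body $tK$ and to each of $M$ and $B_2^n$. The density $e^{-\phi(\|x\|_L)}$ is continuous, because $\phi$ is convex and hence continuous on $(0,\infty)$, with continuity at $0$ after the harmless normalization of $\phi(0)$. We obtain
\[
\mu(tK;M)=\int_{\partial(tK)}h_M(n_{tK}(y))\,e^{-\phi(\|y\|_L)}\,d\mathcal H^{n-1}(y),
\]
and the same expression with $h_{B_2^n}\equiv1$ gives $\mu(tK;B_2^n)$. Hence, for every $t>0$,
\[
m_1\,\mu(tK;B_2^n)\le\mu(tK;M)\le m_2\,\mu(tK;B_2^n).
\]
Both quantities are strictly positive, by the lower bound \eqref{eq:lower-Fn} for large $t$ or simply because the density is positive. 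Taking logarithms gives
\[
\left|\ln\mu(tK;M)-\ln\mu(tK;B_2^n)\right|\le C:=\max\{|\ln m_1|,|\ln m_2|\}.
\]

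Dividing by $\phi(r(K,L)t)$, which tends to $\infty$ because $\phi$ grows at least linearly, the two normalized logarithms differ by at most $C/\phi(r(K,L)t)\to0$. The $\limsup$ statement therefore transfers directly from Theorem~\ref{thm:first-mixed-ball}.

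For the exceptional set, fix $\varepsilon>0$ and choose $T$ with $C/\phi(r(K,L)t)<\varepsilon/2$ for $t\ge T$. Then
\[
\left\{t\ge T:\left|\frac{\ln\mu(tK;M)}{\phi(r(K,L)t)}+1\right|\ge\varepsilon\right\}
\subset
\left\{t:\left|\frac{\ln\mu(tK;B_2^n)}{\phi(r(K,L)t)}+1\right|\ge\frac{\varepsilon}{2}\right\}.
\]
The right-hand set has finite measure by Theorem~\ref{thm:first-mixed-ball}, and $[0,T]$ is bounded, so the exceptional set for $M$ has finite measure.

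No step is a real obstacle. The only point to check is that \eqref{eq:first-variation} applies to $tK$ with a general compact convex $M$ and the given continuous density. This is exactly the stated first variation formula, with $n_{tK}$ defined $\mathcal H^{n-1}$-almost everywhere on $\partial(tK)$.
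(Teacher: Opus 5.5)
Your proposal is correct and follows essentially the same argument as the paper. The first variation formula with $m_1\le h_M\le m_2$ gives $m_1\mu(tK;B_2^n)\le\mu(tK;M)\le m_2\mu(tK;B_2^n)$, the $\limsup$ transfers from Theorem~\ref{thm:first-mixed-ball} because the normalized logarithms differ by $O(1/\phi(r(K,L)t))$, and the exceptional set for $M$ lies, up to a bounded interval, in the one for $B_2^n$ with $\varepsilon/2$. (Minor point: no renormalization of $\phi(0)$ is needed, since a nondecreasing convex function on $[0,\infty)$ is automatically continuous at $0$.)
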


\begin{proof}
      By the integral representation of mixed measures \eqref{eq:first-variation}, we have
\begin{align*}
\mu(tK;M)
&=\int_{\partial(tK)}h_M(n_{tK}(y))e^{-\phi(\|y\|_L)}\,d\mathcal H^{n-1}(y)\\
&=t^{n-1}\int_{\partial K}h_M(n_K(y))e^{-\phi(t\|y\|_L)}\,d\mathcal H^{n-1}(y).
\end{align*}
Since $M$ is a convex body with $0\in\operatorname{int}M$, its support function $h_M$ is strictly positive and continuous on $\mathbb S^{n-1}$. Hence there exist constants $0<\alpha\le\beta<\infty$ such that $h_M(\theta)\in[\alpha,\beta]$ for every $\theta\in\mathbb S^{n-1}$.
    Consequently, 
    \begin{equation}\label{eq:mixed-comparison}
        \alpha \mu (tK; B_2^n) \le \mu(tK;M) \le \beta \mu (tK; B_2^n).
    \end{equation}
 Taking logarithms, dividing by $\phi(r(K,L)t)$, and taking the $\limsup$ as $t\to\infty$, the two-sided comparisonm together with Theorem~\ref{thm:first-mixed-ball} yields 
      \[
  \limsup _{t\to \infty}\frac{\ln \mu (tK; M) }{\phi( r(K, L) t)} = -1, 
\]
    which proves the first assertion.
    
We now proceed to prove the second part of the theorem. Fix $\varepsilon>0$ and define
\[
\Omega_M(\varepsilon)=\left\{t>0:
\left|\frac{\ln\mu(tK;M)}{\phi(r(K,L)t)}+1\right|\ge\varepsilon\right\}.
\]
    By \eqref{eq:mixed-comparison}, we obtain
    \begin{align*}
     \left( \frac{\ln \mu (tK; B_2^n) }{\phi( r(K,L)t)}+1\right)+\frac{\ln \alpha}{\phi( r(K,L)t)}\le\frac{\ln \mu(tK;M) }{\phi( r(K,L)t)}+1&\\\le\left( \frac{\ln \mu (tK; B_2^n) }{\phi( r(K,L)t)}+1\right)+\frac{\ln \beta}{\phi( r(K,L)t)}.
    \end{align*}
Hence,
\[
\left|\left(\frac{\ln\mu(tK;B_2^n)}{\phi(r(K,L)t)}+1\right)
-\left(\frac{\ln\mu(tK;M)}{\phi(r(K,L)t)}+1\right)\right|
\le\frac{\max\{|\ln\alpha|,|\ln\beta|\}}{\phi(r(K,L)t)}.
\]
Since $\phi(r(K,L)t)\to\infty$, there exists $t_0>0$ such that
\[
\frac{\max\{|\ln\alpha|,|\ln\beta|\}}{\phi(r(K,L)t)}
\le\frac{\varepsilon}{2}
\qquad\text{for all }t\ge t_0.
\]
For such $t$, the reverse triangle inequality implies 
\[
\Big| \frac{\ln \mu (tK; M) }{\phi( r(K,L)t)}+1\Big|\ge \varepsilon \quad \Longrightarrow\quad \Big|\frac{\ln \mu(tK;B_2^n) }{\phi( r(K,L)t)}+1\Big|\ge \frac{\varepsilon}{2}.
\]
Equivalently,
\[
\Omega_M(\varepsilon)\cap[t_0,\infty)
\subset \Omega_{B_2^n}\!\left(\frac{\varepsilon}{2}\right),
\]
where, for any $\delta>0$,
\[
\Omega_{B_2^n}(\delta)=\left\{t>0:
\left|\frac{\ln\mu(tK;B_2^n)}{\phi(r(K,L)t)}+1\right|\ge\delta\right\}.
\]
By Theorem~\ref{thm:first-mixed-ball}, the set
$\Omega_{B_2^n}(\varepsilon/2)$ has finite Lebesgue measure. Hence
$\Omega_M(\varepsilon)\cap[t_0,\infty)$ has finite measure. The remaining
portion of $\Omega_M(\varepsilon)$ is contained in a bounded interval and
therefore also has finite measure. Thus $|\Omega_M(\varepsilon)|<\infty$, as
claimed.
\end{proof}

\begin{remark}\label{rem:nonlimit}
In general, the $\limsup$ in Theorem~\ref{GLDP1} cannot be replaced by a
limit. To see this, consider a planar case with $ M=B_2^2$ and 
\[
\begin{aligned}
K&=[-1,1]^2,\\
L&=\operatorname{conv}\{\pm e_1,\pm e_2\}
 =\{x\in\mathbb R^2:|x_1|+|x_2|\le1\}.
\end{aligned}
\]
Then $r(K,L)=1$ and $\|x\|_L=|x_1|+|x_2|$. 

\begin{figure}[H]
\centering
\begin{tikzpicture}[scale=2.2, >=Stealth]

    \foreach \r in {0.3, 0.6, 1.3, 1.6, 1.9} {
        \draw[green!60!black!25, thin] (0,\r) -- (\r,0) -- (0,-\r) -- (-\r,0) -- cycle;
    }

    \fill[cyan!25] (-1,-1) rectangle (1,1);
    \draw[line width=1.2pt, draw=black] (-1,-1) rectangle (1,1);

    \fill[green!45!black!35] (0,1) -- (1,0) -- (0,-1) -- (-1,0) -- cycle;
    \draw[line width=1.2pt, draw=black] (0,1) -- (1,0) -- (0,-1) -- (-1,0) -- cycle;

\draw[dash dot dot, gray!80!black, thick] (0,0) circle (1);
    
    \draw[->, line width=0.8pt] (-2.2,0) -- (2.3,0) node[right] {$x_1$};
    \draw[->, line width=0.8pt] (0,-1.6) -- (0,1.8) node[above] {$x_2$};
    \node[below left] at (0,0) {$0$};

    \node[above left] at (-1,1) {$(-1,1)$};
    \node[above right] at (1,1) {$(1,1)$};
    \node[below left] at (-1,-1) {$(-1,-1)$};
    \node[below right] at (1,-1) {$(1,-1)$};
    \node[above right] at (-0.95, 0.65) {$K$};

    \node[above right] at (0,1) {$(0,1)$};
    \node[below right] at (0,-1) {$(0,-1)$};
    \node[above right] at (1,0) {$(1,0)$};
    \node[above left] at (-1,0) {$(-1,0)$};
    \node[above, align=center] at (0.4, 0.19) {$L $};

\end{tikzpicture}
\caption{Geometric configuration for Remark~\ref{rem:nonlimit}}.

\label{fig:remark31}
\end{figure}

Let $\mu$ have density
$c_\phi e^{-\phi(\|x\|_L)}$, where $c_\phi>0$ is the normalization constant.
Since $h_{B_2^2}\equiv1$ on $\mathbb S^1$, the integral representation
\eqref{eq:first-variation} gives
\[
\mu(tK;B_2^2)
=c_\phi\int_{\partial(tK)}
e^{-\phi(|x_1|+|x_2|)}\,d\mathcal H^1(x).
\]
The four sides of $tK$ give equal contributions. Parametrizing the right
vertical side by $x=(t,s)$, where $-t\le s\le t$, we obtain
\[
c_\phi\int_{-t}^{t}e^{-\phi(t+|s|)}\,ds
=2c_\phi\int_0^t e^{-\phi(t+s)}\,ds
=2c_\phi\int_t^{2t}e^{-\phi(u)}\,du.
\]
Therefore,
\[
\mu(tK;B_2^2)
=8c_\phi\int_t^{2t}e^{-\phi(s)}\,ds.
\]
For the convex nondecreasing function $\phi$ constructed in
\cite[Remark~2]{LZ}, there exists a sequence $t_k\to\infty$ such that
$\phi'(t_k)>e^{\phi(t_k)}$. By convexity,
\[
\int_{t_k}^{2t_k}e^{-\phi(s)}\,ds
\le \int_{t_k}^{\infty}e^{-\phi(s)}\,ds
\le \frac{e^{-\phi(t_k)}}{\phi'(t_k)}
<e^{-2\phi(t_k)}.
\]
Consequently,
\[
\liminf_{t\to\infty}
\frac{\ln\mu(tK;B_2^2)}{\phi(t)}
\le -2,
\]
whereas Theorem~\ref{GLDP1} gives that the corresponding $\limsup$ equals
$-1$. Thus, the normalized expression in Theorem~\ref{GLDP1} need not
converge.
\end{remark}

\begin{remark}
  The mixed measure is $1$-homogeneous in its second argument, so $\mu(tK;tM)=t\mu(tK;M)$. Therefore, the large-deviation-type asymptotics obtained above for $\mu(tK;M)$ also apply to $\mu(tK;tM)$.
\end{remark}

\section{Applications}\label{comparison}
\subsection{Large-Deviation-Type Principle for Complements}

We first use Theorem~\ref{GLDP1} to recover the large-deviation-type principle proved in \cite{LZ}.

\begin{lemma}\label{lemmaLZ}
For a convex body $K\subset\mathbb R^n$ with $0\in\operatorname{int}K$,
\begin{equation}\label{ldL}
\limsup_{t\to\infty}\frac{\ln{\mu((tK)^c)}}{\phi(r(K,L)t)}=-1.
    \end{equation}
\end{lemma}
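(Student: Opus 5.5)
The plan is to express $\mu((tK)^c)$ as an integral of first mixed measures over dilations, and then apply Theorem~\ref{GLDP1} with $M=K$. Since the density is continuous, the map $s\mapsto\mu(sK)$ is differentiable, with $\frac{d}{ds}\mu(sK)=\mu(sK;K)/s$ (this follows by homogeneity: $(s+\varepsilon)K=sK+\varepsilon K$). Because $\mu$ is a probability measure,
\[
\mu((tK)^c)=\int_t^\infty \frac{\mu(sK;K)}{s}\,ds .
\]
Set $R=r(K,L)$.

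For the upper bound, I will use the cap-free upper estimate \eqref{negativeln} together with \eqref{eq:mixed-comparison}. These give $\mu(sK;K)\le C s^{n-1}e^{-\phi(sR)}$. Since $\phi(y)\ge ay-b$ for large $y$ and $\phi$ is convex, I can write $\phi(sR)\ge\phi(tR)+a'(s-t)$ for $s\ge t$ and $t$ large. Hence
\[
\mu((tK)^c)\le C' t^{n-1}e^{-\phi(tR)},
\]
where the polynomial factor is absorbed after integrating. By \eqref{eq:lim}, this yields $\limsup\le-1$. Alternatively, the upper bound is just $\mu((tK)^c)\le\mu((tRL)^c)$, which can be computed directly in polar coordinates for $L$.

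For the lower bound, I will use the exceptional-set statement of Theorem~\ref{GLDP1} rather than just the $\limsup$. Fix $\varepsilon>0$. The set $E$ where $\ln\mu(sK;K)\le-(1+\varepsilon)\phi(Rs)$ has finite measure. So for every $t$, the interval $[t,t+|E|+1]$ contains a subset of measure at least $1$ on which
\[
\mu(sK;K)\ge e^{-(1+\varepsilon)\phi(Rs)} .
\]
Integrating over that subset gives
\[
\mu((tK)^c)\ge \frac{1}{t+c}\,e^{-(1+\varepsilon)\phi(R(t+c))},\qquad c=|E|+1 .
\]

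The main obstacle is that $\phi(R(t+c))$ need not be comparable to $\phi(Rt)$ for every $t$: convex $\phi$ can jump steeply, as in Remark~\ref{rem:nonlimit}. This is exactly why only a $\limsup$ is claimed. I will handle it by noting that
\[
\liminf_{t\to\infty}\frac{\phi(R(t+c))}{\phi(Rt)}=1 .
\]
If instead the ratio exceeded $1+\delta$ for all large $t$, iterating would give $\phi(R(t_0+kc))\ge(1+\delta)^k\phi(Rt_0)$, which is exponential growth in $t$. Is that a contradiction? Exponential growth of $\phi$ is allowed, so this argument needs refinement.

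The cleaner route is to compare with $\phi(Rs)$ evaluated at points of the good set itself. By Theorem~\ref{GLDP1}, there are arbitrarily large $s_0$ with $\ln\mu(s_0K;K)\ge-(1+\varepsilon)\phi(Rs_0)$. I will then use monotonicity in a short window just below $s_0$. For $s\in[s_0-\eta,s_0]$, we have $(sK)^c\supset(s_0K)^c$, which on its own does not help. Instead, I will take $t=s_0$ and integrate over $[s_0,s_0+\eta]$. For this I need a lower bound on $\mu(sK;K)$ for $s$ near $s_0$. I will get it directly from the cap estimate, as in the proof of Theorem~\ref{thm:first-mixed-ball}:
\[
\mu((tK)^c)\ge\int_t^\infty\frac{1}{s}\,c\,F_n(s)\,ds ,
\]
using \eqref{eq:lower-Fn}. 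I then bound $\int_t^{t+1}F_n(s)\,ds$ below by an expression of the form $F_n$-type at $t$ with slightly shifted endpoints. Applying Lemma~\ref{lemma:laplace-integrals} (its exceptional-set form, with $R<P$ adjusted) should then give $\limsup\ge-1$.
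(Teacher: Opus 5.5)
Your upper bound is fine and is essentially the paper's. The gap is in the lower bound. After discarding two attempts, the decisive step is only sketched (``should then give''), and as described it does not work. Suppose you bound $\int_t^{t+1}F_n(s)\,ds$ by an $F_n$-type integral whose lower limit of $y$-integration is shifted from $tR$ to $(t+c)R$. Then Lemma~\ref{lemma:laplace-integrals} only controls the rate $\phi(R(t+c))$, which is exactly the obstruction you identified yourself. For $\phi(x)=e^x$ one has $\phi(R(t+c))=e^{Rc}\phi(Rt)$, so you would only get $\limsup\ge-e^{Rc}$. Since the logarithms are negative and $\phi(Rt)\le\phi(R(t+c))$, switching to the denominator $\phi(Rt)$ goes the wrong way. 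Changing $P$ is harmless, but the lower endpoint must stay exactly $tR$. The exceptional-set form of the lemma does not rescue a shifted endpoint either.

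The missing idea is to keep the lower limit at $tR$ and let the $s$-integration raise the degree of the polynomial weight. Note first that $\frac{d}{ds}\mu(sK)=\mu(sK;K)$, not $\mu(sK;K)/s$. Your own identity $(s+\varepsilon)K=sK+\varepsilon K$ gives this directly, though the extra factor would not affect the rate. Then \eqref{eq:mixed-comparison} with $M=K$ and \eqref{eq:lower-Fn} give $\mu((tK)^c)\ge c\int_t^\infty F_n(s)\,ds$ for large $t$. For $y\in[tR,tP]$ the constraints $s\ge t$, $sR\le y\le sP$ reduce to $s\in[t,y/R]$, so by Tonelli
\[
\int_t^\infty F_n(s)\,ds\ \ge\ \int_{tR}^{tP}\frac{e^{-\phi(y)}}{(P-R)^n}\int_t^{y/R}(y-sR)^{n-1}\,ds\,dy=\frac{P-R}{nR}\,F_{n+1}(t).
\]
The $\limsup$ part of Lemma~\ref{lemma:laplace-integrals} with $m=n+1$ now gives $\limsup\ge-1$; no exceptional-set form and no change of $R$ is needed.

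Completed this way, your argument is a genuine alternative to the paper's. The paper does not return to the cap estimate. Instead it writes $\ln\mu((tK)^c)/\phi(Rt)=X(t)Y(t)$ with $X=\ln F/\ln\mu(tK;K)$, and shows that $\{X\ge1+\varepsilon\}$ has finite measure via the integrability of $-F'/F^{1/(1+\varepsilon)}$. It then combines this with the exceptional-set part of Theorem~\ref{GLDP1}. That is a soft transfer using only the conclusion of Theorem~\ref{GLDP1}, whereas your route reuses the geometric input and is more direct.
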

\begin{proof} 
It was observed in \cite{LG} that, directly from the definition \eqref{eq:mixed-measure-definition}, one has  $\mu(xK; K)=\frac{d}{dx}\mu(xK)$, for all $x>0$. Thus 
\begin{equation}\label{form}
\frac{\ln\mu((tK)^c)}{\phi(r(K,L)t)} = \frac{\ln (1- \mu (tK))}{\phi( r(K, L) t)}
= \frac{\ln (\int_t^\infty \mu(xK; K) dx)}{\phi( r(K, L) t)}.
\end{equation}
Using Theorem~\ref{GLDP1}, we obtain that for every $\delta\in(0,1/2)$ there exists $x_\delta>0$ such that for all $x>x_\delta$
\[
\mu(xK; K) \le e^{-(1-\delta) \phi( r(K, L) x)}.
\]
Since $\phi$ is convex and nonconstant, there exists $a>0$ such that $\phi'(y)\ge a$ for almost every sufficiently large $y$. Consequently, for all sufficiently large $t$ and every $x\ge t$,
\[
{\phi(r(K,L)x)\ge\phi(r(K,L)t)+c(x-t).}
\]
Therefore,
\[
\int_t^\infty \mu(xK; K) dx \le c' e^{-(1-\delta)\phi( r(K, L) t)}.
\]
Together with \eqref{form}, this gives 
\[
\limsup _{t\to \infty}\frac{\ln\mu((tK)^c)}{\phi(r(K,L)t)} \le -1+\delta.
\]
Letting $\delta\downarrow0$ proves the upper bound.

For the lower bound, set
\[
F(t)=\int_t^\infty\mu(xK;K)\,dx
\]
and, for $\varepsilon>0$, define
\begin{equation}\label{con}
A_\varepsilon=\left\{t\ge0:\frac{\ln F(t)}{\ln\mu(tK;K)}\ge1+\varepsilon\right\}.
\end{equation}
We claim that $|A_\varepsilon|<\infty$. Since $F'(t)=-\mu(tK;K)$, for all sufficiently large $t\in A_\varepsilon$, both logarithms are negative and the defining inequality is equivalent to
\[
1\le\frac{\mu(tK;K)}{F(t)^{1/(1+\varepsilon)}}
=-\frac{F'(t)}{F(t)^{1/(1+\varepsilon)}}.
\]
The right-hand side is nonnegative and integrable on every tail:
\[
\int_T^\infty-\frac{F'(t)}{F(t)^{1/(1+\varepsilon)}}\,dt
=\frac{1+\varepsilon}{\varepsilon}F(T)^{\varepsilon/(1+\varepsilon)}<\infty.
\]
Hence $A_\varepsilon$ has finite measure.

Now we are ready to prove the lower bound. Let 
\[
X(t)= \frac{\ln F(t)}{\ln \mu(tK;K)} \mbox{ and } Y(t)= \frac{\ln \mu (tK; K) }{\phi( r(K, L) t)}.
\]
We need to show that 
$\limsup\limits_{t\to \infty} X(t)Y(t) \ge -1.$
Assume, toward a contradiction, that there exist $\alpha>1$ and $t_\alpha>0$ such that
\[
X(t)Y(t)<-\alpha\qquad(t>t_\alpha).
\]
For all sufficiently large $t$, both logarithms defining $X(t)$ are negative, so $X(t)>0$. Hence
\[
\begin{aligned}
\{t>t_\alpha:X(t)Y(t)<-\alpha\}
&\subset
\left\{t>t_\alpha:X(t)\ge\frac{\alpha+1}{2}\right\}\\
&\quad\cup
\left\{t>t_\alpha:Y(t)<-\frac{2\alpha}{\alpha+1}\right\}.
\end{aligned}
\]
The first set has finite measure by the preceding estimate for $A_\varepsilon$, with $\varepsilon=(\alpha-1)/2$. The second has finite measure by the exceptional-set assertion of Theorem~\ref{GLDP1}, since $2\alpha/(\alpha+1)>1$. This contradicts the fact that the set on the left contains a full tail interval and completes the proof of the lower bound.
\end{proof}

\subsection{A Comparison Principle}
We use Theorem \ref{GLDP1} to derive a comparison principle for convex bodies based on the large-deviation-type asymptotics of mixed measures. The result shows that asymptotic dominance of mixed measures under dilation forces a geometric inclusion between the underlying convex bodies.

We continue with the normalized log-concave measure $\mu$ defined at the beginning of Section~\ref{twobodies}.
\begin{theorem}\label{application}
Let $K, M\subset \mathbb{R}^n$ be convex bodies containing the origin in their interiors. Suppose that there exists $R>0$ such that
\begin{equation}\label{assrt}
\mu(tRL; M) \ge \mu (tK; M) \qquad\text{for all sufficiently large } t. 
\end{equation}
Then $RL\subseteq K$.
\end{theorem}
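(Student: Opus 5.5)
We argue by contradiction, using Theorem~\ref{GLDP1} for both bodies $RL$ and $K$. First record the relevant inradii. For the body $RL$ we have $r(RL,L)=R$, since $sL\subset RL$ exactly when $s\le R$. For $K$, set $r=r(K,L)$. The inclusion $RL\subseteq K$ is equivalent to $R\le r$, by the definition of $r(K,L)$ as a maximum together with the nestedness of the dilates $sL$. So suppose, toward a contradiction, that $RL\not\subseteq K$, i.e.\ $R>r$.

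By Theorem~\ref{GLDP1} applied to $RL$ (with $L$ in the role of the first body), for every $\varepsilon>0$ we have
\[
\frac{\ln\mu(tRL;M)}{\phi(Rt)}\le-1+\varepsilon
\]
for all $t$ outside a set of finite measure. In fact the upper-tail set is bounded: the estimate \eqref{negativeln} combined with \eqref{eq:mixed-comparison} gives
\[
\mu(tRL;M)\le \beta\, t^{n-1}\mathcal H^{n-1}(\partial(RL))\,e^{-\phi(Rt)}.
\]
Applying the exceptional-set part of Theorem~\ref{GLDP1} to $K$, the set of $t$ with
\[
\ln\mu(tK;M)\le -(1+\varepsilon)\,\phi(rt)
\]
has finite measure. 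Hence there exist arbitrarily large $t$, indeed a set of infinite measure, on which both
\[
\ln\mu(tK;M)>-(1+\varepsilon)\phi(rt)
\qquad\text{and}\qquad
\ln\mu(tRL;M)\le -(1-\varepsilon)\phi(Rt)+C\ln t
\]
hold, the second by the explicit bound above. On this set the hypothesis \eqref{assrt} gives
\[
(1-\varepsilon)\phi(Rt)-C\ln t\le(1+\varepsilon)\phi(rt).
\]

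The remaining step, and the main obstacle, is to show that $\phi(Rt)$ beats $(1+\varepsilon)\phi(rt)$ by a definite margin when $R>r$, using only convexity of $\phi$. Since $\phi$ is convex with $\phi(0)\ge0$, the quotient $\phi(s)/s$ is eventually comparable to a nondecreasing function. More precisely, $(\phi(s)-\phi(0))/s$ is nondecreasing, so
\[
\phi(Rt)-\phi(0)\ge \frac{R}{r}\bigl(\phi(rt)-\phi(0)\bigr).
\]
Choose $\varepsilon>0$ small enough that $(1-\varepsilon)R/r>1+2\varepsilon$. Then
\[
(1-\varepsilon)\phi(Rt)-(1+\varepsilon)\phi(rt)\ge \varepsilon\,\phi(rt)-C',
\]
which exceeds $C\ln t$ for large $t$ by \eqref{eq:lim}. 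This contradicts the previous inequality along an unbounded set of $t$. Hence $R\le r$, i.e.\ $RL\subseteq K$. The delicate points are making sure the exceptional sets are handled correctly (we need only one good large $t$, and the intersection of the complement of a finite-measure set with a tail is nonempty), and tracking the additive constants $\phi(0)$ and $C\ln t$, which are absorbed using $\phi(rt)\to\infty$ and \eqref{eq:lim}.
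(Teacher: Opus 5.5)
Your proof is correct and is essentially the paper's argument. You argue by contradiction, take the upper rate for $RL$ and the lower rate for $K$ from Theorem~\ref{GLDP1}, and close with the convexity bound $\phi(rt)\le \frac{r}{R}\phi(Rt)+\bigl(1-\frac{r}{R}\bigr)\phi(0)$ (your monotone secant slope); the paper differs only in bookkeeping, dividing through and comparing $\limsup$'s rather than evaluating along an unbounded set of $t$. Note that the $\limsup$ assertion of Theorem~\ref{GLDP1} for $K$ already gives the needed sequence $t_k\to\infty$, so you do not need the exceptional-set part.
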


\begin{proof} Assume, toward a contradiction, that \eqref{assrt} holds but $RL\not\subseteq K$. Then the maximal dilate of $L$ contained in $K$ has radius $rR$ for some $r\in(0,1)$. It follows from \eqref{assrt} that
 \begin{equation*}
      \frac{\ln{\mu(tRL; M)}}{\phi(tR)}\ge\frac{\ln{\mu (tK; M)}}{\phi(trR)}\frac{\phi(trR)}{\phi(tR)}.
 \end{equation*}
By the convexity of $\phi$, we obtain
 \[
\phi(trR)=\phi(r(tR)+(1-r)0)\le r\phi(tR)+(1-r)\phi(0),
\] 
 or 
 \[
\frac{\phi(trR)}{\phi(tR)}\le r +(1-r)\frac{\phi(0)}{\phi(tR)}.
\]
 Since $\phi(tR) \to \infty $ as $t\to\infty,$ there exist constants $r'\in (0,1)$ and $t_0>0$ such that \[
\frac{\phi(trR)}{\phi(tR)}\le r'\quad\text{for all } t>t_0.
\] Moreover, for large $t,$ the quantities $\ln{\mu(tRL; M)}$ and $\ln{\mu(tK; M)}$ are negative by \eqref{eq:mixed-comparison} and \eqref{negativeln}. Hence, for all $t>t_0$
  \begin{equation}\label{comparison-ineq}
      \frac{\ln{\mu(tRL; M)}}{\phi(tR)}\ge r'\;\frac{\ln{\mu (tK; M)}}{\phi(trR)}.
 \end{equation} 
Taking the $\limsup$ as $t\to\infty$ on both sides of \eqref{comparison-ineq} and applying Theorem~\ref{GLDP1}, we obtain $-1\ge-r'$, which contradicts $r'<1$. Hence $RL\subseteq K$.
\end{proof}
\begin{remark} Saracco and Stefani \cite{SS} proved that, for a Borel measure $\nu$ on $\mathbb R^n$ with locally integrable density, if $\nu^+(\partial K)\le\nu^+(\partial L)$ for all convex sets $K\subseteq L$, then $\nu$ is a multiple of Lebesgue measure. A related phenomenon is discussed in \cite{WBMTII}: if $\nu$ is a Radon measure on $\mathbb R^n$ such that
$\nu^+(\partial(K+Q))\ge\nu^+(\partial K)$
for every convex body $K$ and compact convex set $Q$, then $\nu$ is a multiple of Lebesgue measure. For a log-concave measure of the form $d\mu(x)=e^{-\phi(\|x\|_L)}\,dx$, Theorem~\ref{application}, applied with $M=B_2^n$, shows that if $\mu^+(\partial(tL))\ge\mu^+(\partial(tK))$ for all sufficiently large $t$, then $L\subseteq K$.
\end{remark}

\newpage
\section{Three-Body Mixed Measures in the Plane}\label{three}
We investigate the large-deviation behavior of mixed measures involving three convex bodies in $\mathbb R^2$. We begin with the Gaussian measure, for which the principle can be verified explicitly, and then consider general log-concave measures of the form $d\mu(x)=e^{-\phi(\|x\|_L)}\,dx$.

\subsection{{The Gaussian Case}} We first treat the planar Gaussian measure. Although the result is a special case of Theorem~\ref{theorem:ldpmix}, a separate proof highlights the geometric features governing the asymptotic behavior and clarifies the roles of curvature and support functions.
\begin{theorem} \label{thm:gaussian-rate}
Let $A,B,C\subset\mathbb R^2$ be convex bodies with the origin in their interiors, and assume that $A\in C_+^2$. Then
    \[
    \lim_{t \to \infty} \frac{\ln \left[ -\gamma_2(tA; B, C)\right]}{t^2/2}  = -   r(A, B_2^2)^2.
\]
\end{theorem}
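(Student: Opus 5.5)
Starting point: the explicit planar formula \eqref{eq:gaussian-planar} applied to $tA$. Since $h_{tA}=th_A$ and $f_{tA}=tf_A$, it gives
\[
-\gamma_2(tA;B,C)=\frac1{2\pi}\int_0^{2\pi}e^{-t^2\frac{h_A^2+h_A'^2}{2}}\Big[h_Bh_C\big(t^2h_Af_A-1\big)+h_B'h_C'\Big]\,d\theta .
\]
The quantity $h_A(\theta)^2+h_A'(\theta)^2=|x_A(\theta)|^2$ is the squared norm of the boundary point of $A$ with outer normal $u(\theta)$. Because $A\in C^2_+$, the map $\theta\mapsto x_A(\theta)$ is a $C^1$ parametrization of $\partial A$. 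Hence
\[
\min_\theta |x_A(\theta)|^2=\min_{y\in\partial A}|y|^2=r(A,B_2^2)^2=:r^2 .
\]
The plan is a Laplace-type argument for this one-dimensional integral. It gives matching upper and lower bounds of the form $e^{-t^2r^2/2}$ times factors that are polynomial in $t$ or $e^{-\delta t^2}$. Dividing the logarithm by $t^2/2$ then yields the genuine limit.

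\textbf{Upper bound.} The bracket is bounded by $C(t^2+1)$, since $h_A,f_A,h_B,h_C,h_B',h_C'$ are bounded. Support functions of convex bodies are Lipschitz, so $h_B'$ and $h_C'$ exist a.e. and are bounded. Therefore $|\gamma_2(tA;B,C)|\le C' t^2 e^{-t^2r^2/2}$. This gives $\limsup \ln[-\gamma_2(tA;B,C)]/(t^2/2)\le -r^2$, provided $-\gamma_2$ is positive for large $t$, which the lower bound will show.

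\textbf{Lower bound.}
\begin{enumerate}
\item Let $\theta_0$ be a minimizer of $|x_A|$. The dominant term in the bracket is $t^2h_Bh_Ch_Af_A$. On $[0,2\pi]$ it satisfies $h_Bh_Ch_Af_A\ge c_0>0$, because $0\in\operatorname{int}$ of every body and $f_A>0$ for $A\in C^2_+$.
\item The remaining terms $-h_Bh_C+h_B'h_C'$ are bounded by a constant $C_1$. So for $t^2\ge 2C_1/c_0$ the bracket is at least $\tfrac{c_0}{2}t^2$, uniformly in $\theta$. In particular $-\gamma_2(tA;B,C)>0$ for large $t$, and the logarithm is defined.
\item By continuity of $g(\theta)=|x_A(\theta)|^2$, for every $\delta>0$ there is an interval $I_\delta\ni\theta_0$ of positive length $\ell_\delta$ on which $g\le r^2+\delta$.
\item This gives
\[
-\gamma_2(tA;B,C)\ge \frac{c_0}{4\pi}\,t^2\,\ell_\delta\, e^{-t^2(r^2+\delta)/2}.
\]
Hence $\liminf \ln[-\gamma_2(tA;B,C)]/(t^2/2)\ge -r^2-\delta$. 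Letting $\delta\to0$ closes the argument.
\end{enumerate}

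\textbf{Expected obstacle.} The main point needing care is justifying the rescaled formula and the identification of $\min_\theta(h_A^2+h_A'^2)$ with $r(A,B_2^2)^2$.
\begin{itemize}
\item Formula \eqref{eq:gaussian-planar} is stated for $A\in C^2_+$ and compact convex $B,C$, which covers $tA$. The derivatives $h_B'$ and $h_C'$ are then understood a.e.
\item For the identification, the largest centered disc inside $A$ has radius equal to the distance from $0$ to $\partial A$, i.e. $\min_{y\in\partial A}|y|$, and the Gauss map parametrization covers all of $\partial A$.
\item The sign of the bracket is the only place where the three-body structure matters. The point is that, unlike the Lebesgue-type term, the curvature term $t^2h_Af_A$ dominates and forces $\gamma_2(tA;B,C)<0$. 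This is why the statement uses $-\gamma_2$.
\end{itemize}
No exceptional-set phenomenon arises here, because the exponent $t^2/2$ is regular: polynomial prefactors are negligible against $t^2$ and no $\limsup$ is needed.
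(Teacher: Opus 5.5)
Your proposal is correct and follows essentially the same route as the paper. You rescale the planar Gaussian formula and bound the bracket between constant multiples of $t^2$, using that $h_Bh_Ch_Af_A$ is bounded below and $h_B',h_C'$ are essentially bounded; you identify $\min_\theta(h_A^2+h_A'^2)$ with $r(A,B_2^2)^2$ via $x_A(\theta)$, and finish with a Laplace-type estimate (which you do directly from a sublevel interval around a minimizer, where the paper argues the same sublevel-set bound by contradiction).
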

\begin{proof}
Our goal is to show that
\begin{equation}\label{support}
    \lim_{t \to \infty} \frac{\ln \left[ -\gamma_2(tA; B, C)\right]}{t^2/2}  = -  \min\limits_{\theta \in [0, 2\pi]} \left[  h_A^2(\theta) + (h_A'(\theta))^2 \right].\end{equation}
    Once this limit is established, the statement of the theorem follows immediately. Indeed, $x_A(\theta)=h_A(\theta)u(\theta)+h_A'(\theta)u'(\theta)$ parametrizes $\partial A$, so
$|x_A(\theta)|^2=h_A(\theta)^2+h_A'(\theta)^2$. The minimum of $|x|$ over $x\in\partial A$ is the radius $r(A,B_2^2)$ of the largest centered Euclidean ball contained in $A$. Hence the minimum in \eqref{support} equals $r(A,B_2^2)^2$. 

We now analyze \eqref{eq:gaussian-planar}. For $t>0$ and almost every
$\theta\in[0,2\pi]$, set
\[
G_t(\theta)=
-\Big[h_B(\theta)h_C(\theta)
\big(1-t^2h_A(\theta)f_A(\theta)\big)
-h'_B(\theta)h'_C(\theta)\Big].
\]
The functions $h'_B$ and $h'_C$ are essentially bounded on $[0,2\pi]$; that is, each is bounded outside a set of Lebesgue measure zero.
Thus there exist positive constants $c_1,c_2,c_3$, independent of $\theta$,
such that, for every $t>c_3$ and almost every $\theta\in[0,2\pi]$,
\[
\begin{aligned}
c_1t^2h_B(\theta)h_C(\theta)h_A(\theta)f_A(\theta)
&\le G_t(\theta)\\
&\le c_2t^2h_B(\theta)h_C(\theta)h_A(\theta)f_A(\theta).
\end{aligned}
\]
Since $h_Bh_Ch_Af_A$ is continuous and strictly positive on $[0,2\pi]$,
there exist constants $c',c''>0$ such that, for every $t>c_3$,
\[
c't^2\le G_t(\theta)\le c''t^2
\]
for almost every $\theta\in[0,2\pi]$.
Set $\psi(\theta)=\frac12\big(h_A^2(\theta)+(h_A'(\theta))^2\big)$.
 Formula~\eqref{eq:gaussian-planar} and the definition of $G_t$ now give the explicit sandwich
\[
\frac{c't^2}{2\pi}\int_0^{2\pi}e^{-t^2\psi(\theta)}\,d\theta
\le-\gamma_2(tA;B,C)
\le\frac{c''t^2}{2\pi}\int_0^{2\pi}e^{-t^2\psi(\theta)}\,d\theta.
\]
In particular, $-\gamma_2(tA;B,C)>0$ for all sufficiently large $t$, and the polynomial prefactor does not affect the logarithmic rate.

To finish the proof, it is enough to show that
    \[
\lim_{t \to \infty} \frac{\ln \left[ \int_0^{2\pi}e^{-t^2 \,\psi(\theta)}\,d\theta\right]}{t^2}  = - \min\limits_{\theta \in [0, 2\pi]} \psi(\theta).
\]
This limit follows from Laplace's principle. We include the argument for completeness. The upper bound is immediate:
 \[
 \frac{\ln \left[ \int_0^{2\pi}e^{-t^2\,\psi(\theta)}\,d\theta\right]}{t^2}  \le  - \min\limits_{\theta \in [0, 2\pi]} \psi(\theta) +\frac{\ln(2\pi)}{t^2}. 
\]
 To establish the matching lower bound, let $r=  \min\limits_{\theta \in [0, 2\pi]} \psi(\theta)$. Suppose, toward a contradiction, that there exist $\varepsilon >0 $ and a sequence $t_k \to \infty$ such that 
$\ln \int_0^{2\pi}e^{-t_k^2\,\psi(\theta)}\,d\theta  <  - (r + \varepsilon) t_k^2$ for all $k \in {\mathbb N}$.
Thus
\begin{equation}\label{inq1} 
\int_0^{2\pi}e^{-t_k^2\,[\psi(\theta)- (r +\varepsilon)]} \,d\theta< 1.
\end{equation}
For $\delta >0,$ define the level set $A_\delta =\{\theta\in [0, 2\pi]:  \psi(\theta) \le r+\delta\}$. By continuity of $\psi,$ this set has positive Lebesgue measure $|A_\delta|>0.$ Pick $\delta=\varepsilon/2$ and note that
 for $\theta \in A_{\varepsilon/2}$ one has
$
\psi(\theta)-(r+\varepsilon)\le-\varepsilon/2,
$
and therefore, using \eqref{inq1}, we obtain
\[
 1>  \int_{A_{\varepsilon/2}} e^{-t_k^2 \,[\psi(\theta)- (r +\varepsilon)]} \,d\theta \ge   \int_{A_{\varepsilon/2}} e^{t_k^2 \,\varepsilon/2} \,d\theta  = e^{t_k^2 \,\varepsilon/2}|A_{\varepsilon/2}|.
\]
Finally, this yields $e^{-t_k^2\varepsilon/2}>|A_{\varepsilon/2}|$. Letting $t_k\to\infty$ gives $|A_{\varepsilon/2}|=0$, a contradiction. This establishes \eqref{support} and completes the proof.

\end{proof}

 \subsection{General Gauge-Dependent Mixed Measures in the Plane}

 The following theorem provides an explicit integral representation of the weighted mixed measure $\mu(A;B,C)$ in the plane. It extends the Gaussian formula \eqref{eq:gaussian-planar} to a broader class of log-concave measures with densities of the form $e^{-\phi(\|x\|_L)}$. We use the angular notation introduced before formula~\eqref{eq:gaussian-planar}.

\begin{theorem}\label{theorem_mu}
Let $A,B,C,L\subset\mathbb R^2$ be convex bodies with the origin in their interiors, assume $A,L\in C_+^2$, and let  $\phi:[0,\infty)\to[0,\infty)$ be nondecreasing and convex, with $\phi\in C^2((0,\infty))$. Assume that $\mu$ is a probability measure with density $e^{-\phi(\|x\|_L)}$. Then

\begin{align*}
\mu(A;B,C)
&=\int_0^{2\pi} e^{-\phi(\|x_A(\theta)\|_L)}
\Big[h_B(\theta)h_C(\theta)\\
&\qquad-h_B(\theta)h_C(\theta)\phi'(\|x_A(\theta)\|_L)f_A(\theta)
\big\langle\nabla\|x_A(\theta)\|_L,u(\theta)\big\rangle\\
&\qquad-h'_B(\theta)h'_C(\theta)\Big] \,d\theta.
\end{align*}
Here $h'_B$ and $h'_C$ are understood in the almost-everywhere sense; changing their values on a null set does not affect the formula.
 \end{theorem}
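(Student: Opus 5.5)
We will apply Theorem~\ref{theorem:WBMTI} with $n=2$ and $\rho(x)=e^{-\phi(\|x\|_L)}$, and then rewrite every term in the angular parametrization. With $n=2$, $S_{A[n-2],B[1]}=S_B$, so
\[
\mu(A;B,C)=\int_{\mathbb S^1}h_C\,\rho(x_A)\,dS_B+\int_{\mathbb S^1}h_C\,\langle\nabla\rho(x_A),\nabla h_B\rangle\,dS_A .
\]
Since $A\in C^2_+$, we have $dS_A=f_A(\theta)\,d\theta$ and $n_A^{-1}(u(\theta))=x_A(\theta)$. Moreover, $\nabla h_B(u(\theta))=h_B(\theta)u(\theta)+h_B'(\theta)u'(\theta)$ wherever this exists, which is almost everywhere. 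The density is $C^2$ away from the origin because $L\in C^2_+$ and $\phi\in C^2((0,\infty))$, and $x_A(\theta)\neq0$ since $0\in\operatorname{int}A$. This is the regularity needed near $\partial A$; if necessary we smooth $\rho$ near $0$, which does not affect values on $\partial A$. Then $\nabla\rho(x)=-e^{-\phi(\|x\|_L)}\phi'(\|x\|_L)\nabla\|x\|_L$.

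The key step is to handle the first term, where $dS_B$ is the surface area measure of a possibly nonsmooth $B$. First take $B\in C^2_+$. Then $dS_B=(h_B+h_B'')\,d\theta$, and we integrate by parts twice in $\theta$ the term $h_C\rho(x_A)h_B''$. Writing $g(\theta)=h_C(\theta)\rho(x_A(\theta))$, we get $\int g h_B''=-\int g' h_B'$. Here $g'=h_C'\rho(x_A)+h_C\langle\nabla\rho(x_A),x_A'\rangle$, and $x_A'(\theta)=f_A(\theta)u'(\theta)$. The piece $h_C\langle\nabla\rho(x_A),f_Au'\rangle h_B'$ then cancels exactly the $h_B'$-component of the second integral, namely $h_C f_A h_B'\langle\nabla\rho(x_A),u'\rangle$.

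What remains is the following sum:
\[
\int_0^{2\pi}\rho(x_A)\bigl[h_Bh_C-h_B'h_C'\bigr]d\theta+\int_0^{2\pi}h_Bh_Cf_A\langle\nabla\rho(x_A),u\rangle\,d\theta.
\]
Substituting the formula for $\nabla\rho$ gives precisely the asserted formula. We will check the sign conventions and the cancellation carefully; this bookkeeping is the main computational point.

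The main obstacle is removing the assumption $B\in C^2_+$ and the differentiability issues in $h_C$. We will approximate $B$ and $C$ in the Hausdorff metric by bodies $B_k,C_k\in C^2_+$, which is standard via Schneider. For fixed $A$, the left side $\mu(A;B,C)$ is continuous in $B$, because the representation \eqref{mimeas3} is continuous under $S_{B_k}\to S_B$ weakly and $h_{C_k}\to h_C$ uniformly.

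On the right side, $h_{B_k}\to h_B$ uniformly, and $h_{B_k}'\to h_B'$ almost everywhere with a uniform bound. The bound follows because $|h'|$ is controlled by the circumradius, and convergence holds at differentiability points of the limit by convexity of the support function restricted to lines. Dominated convergence therefore passes the limit in the $h_B'h_C'$ term, and all other factors are continuous and bounded. This also shows that changing $h_B',h_C'$ on null sets is irrelevant.
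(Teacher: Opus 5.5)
Your proposal is correct and follows essentially the same route as the paper. You apply Theorem~\ref{theorem:WBMTI} with $n=2$, pass to angular coordinates, and integrate by parts once (your ``twice'' is a slip) so that the $h_B''$ term cancels the $u'$-component of the gradient term; you then remove the smoothness assumptions on $B$ and $C$ by approximation. The differences are minor: you justify the limit via Hausdorff approximation, weak convergence of $S_{B_k}$, and dominated convergence of the a.e.\ convergent derivatives, instead of the paper's $L^2$ convergence of $h'_{B_j}$. You also address the lack of $C^2$ regularity of $\rho$ at the origin, which the paper omits; the precise justification is that $A\subset A+sB+tC$, so changing $\rho$ inside $A$ shifts $\mu(A+sB+tC)$ only by a constant.
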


\begin{proof} Our goal is to simplify \eqref{mimeas3} and \eqref{defmeas} in $\mathbb R^2$ for $d\mu(x)=e^{-\phi(\|x\|_L)}\,dx$. We first assume that $B,C\in C_+^2$ and remove this additional assumption at the end by approximation. We begin by expressing $\nabla h_A(u)$ in the standard orthonormal frame $\{u(\theta),u'(\theta)\}$.
\begin{equation} \label{h} 
    \nabla h_A(u( \theta))=h_A(\theta)u(\theta) + h'_A(\theta)u'(\theta).
\end{equation}
Differentiating $x_A(\theta)=\nabla h_A(u(\theta))$ and using $u''(\theta)=-u(\theta),$ we obtain
\begin{equation}\label{curvature2}
x'_A(\theta)=\big(h''_A(\theta) +h_A(\theta)\big)\,u'(\theta)=f_A(\theta)\,u'(\theta),
\end{equation}
where we used the standard formula for the curvature function in ${\mathbb R}^2$ (see, for example, \cite{Ga}). Using \eqref{defmeas}, we observe that  $\mu (A; B, C)$ can be written as 
\begin{align} \label{eq:three-body-decomposition}
    \mu (A; B, C)&= \int _{\mathbb S^1}  h_C(u)e^{-\phi(\| n_A^{-1} (u) \|_{L})}\, dS_{B}(u) \, \nonumber\\
    &+ \int _{\mathbb S^1} h_C(u) \big\langle \nabla  e^{-\phi(\| n_A^{-1} (u) \|_{L})},\, \nabla h_B(u) \big\rangle \, dS_A(u).
\end{align}
Since $A\in C^2_+$, the inverse Gauss map satisfies $n_A^{-1}(u(\theta))=\nabla h_A(u(\theta))=x_A(\theta)$. Moreover, the surface area measures satisfy $dS_A(\theta)= f_A(\theta)\, d\theta$. Since $B\in C^2_+$, we have  $dS_B (\theta)=f_B(\theta)d\theta=(h''_B(\theta) +h_B(\theta))\, d\theta.$ Substituting these observations into \eqref{eq:three-body-decomposition} and changing variables from $u$ to $\theta,$ we obtain
\begin{align}\label{mix3}
    \mu (A; B, C) & = \int _{0} ^ {2\pi}  h_C(\theta)e^{-\phi(\| x_A(\theta) \|_{L})}\, (h''_B(\theta) +h_B(\theta)) \,d\theta  \nonumber\\ &  +\int _{0} ^ {2\pi} h_C(\theta) \big\langle \nabla e^{-\phi(\|x_A(\theta) \|_{L})}, h_B(\theta)u(\theta) + h'_B(\theta)u'(\theta) \big\rangle \,  f_A(\theta) \, d\theta. 
\end{align}
We also observe that 
\[
 \nabla e^{-\phi(\|x_A(\theta)\|_{L})}=-e^{-\phi(\| x_A(\theta)\|_{L})}\,\phi'(\| x_A(\theta)\|_{L})\,\nabla \|x_A(\theta) \|_{L},
\]
and substituting this equality into \eqref{mix3} yields
\begin{align}\label{eq:integration-by-parts-start}
    \mu (A; B, C)  & = \int _{0} ^ {2\pi}  h_C(\theta)\,e^{-\phi(\| x_A(\theta) \|_{L})}\, (h''_B(\theta) +h_B(\theta)) \,d\theta \nonumber \\ & \quad -\int _{0} ^ {2\pi}  h_C(\theta)\,e^{-\phi(\| x_A(\theta) \|_{L})}\,\phi'(\| x_A(\theta) \|_{L}) \, \nonumber\\ \times&\Big[h_B(\theta) \big\langle \nabla \|x_A(\theta) \|_{L}\,,u(\theta)\big\rangle +h'_B(\theta)\big\langle \nabla \|x_A(\theta) \|_{L},\,u'(\theta) \big\rangle \Big]\,f_A(\theta) \, d\theta.
\end{align}
Observe that 
\begin{align}\label{ident}
d\big[e^{-\phi(\| x_A(\theta) \|_{L})} \big]&=-e^{-\phi(\| x_A(\theta) \|_{L})}\,\phi'(\| x_A(\theta) \|_{L})\,\langle \nabla\| x_A(\theta) \|_{L},x'_A(\theta) \rangle \, d\theta \nonumber\\
&=-e^{-\phi(\| x_A(\theta) \|_{L})}\,\phi'(\| x_A(\theta) \|_{L})\,f_A(\theta)\,\langle \nabla\| x_A(\theta) \|_{L},u'(\theta) \rangle \, d\theta,
\end{align}
where we used \eqref{curvature2}.

Using \eqref{ident}, we integrate by parts the terms involving $\langle\nabla\|x_A(\theta)\|_L,u'(\theta)\rangle$ in \eqref{eq:integration-by-parts-start}; the resulting term cancels the second-order derivatives of $h_B$. Collecting the remaining terms gives the stated formula when $B,C\in C_+^2$.

For arbitrary convex bodies $B$ and $C$, choose $B_j,C_j\in C_+^2$ so that $h_{B_j}\to h_B$ and $h_{C_j}\to h_C$ uniformly, while $h'_{B_j}\to h'_B$ and $h'_{C_j}\to h'_C$ in $L^2(\mathbb S^1)$. Such sequences may be obtained by smoothing the support functions on $\mathbb S^1$ and adding a vanishing positive constant. The terms containing $h_{B_j}h_{C_j}$ then converge uniformly, and
\[
\|h'_{B_j}h'_{C_j}-h'_Bh'_C\|_{L^1}
\le
\|h'_{B_j}-h'_B\|_{L^2}\|h'_{C_j}\|_{L^2}
+\|h'_B\|_{L^2}\|h'_{C_j}-h'_C\|_{L^2}
\longrightarrow 0.
\]
The continuity under this approximation established in the proof of \cite[Theorem~2.7]{WBMTI} also gives $\mu(A;B_j,C_j)\to\mu(A;B,C)$. Passing to the limit proves the formula for arbitrary $B$ and $C$ and completes the proof.
\end{proof}
We now establish a large-deviation-type principle for mixed measures involving three planar convex bodies. This result extends the Gaussian case treated in Theorem~\ref{thm:gaussian-rate} to a broad class of log-concave measures whose densities depend on a gauge.
\begin{theorem}\label{theorem:ldpmix}
Let $A,B,C,L\subset\mathbb R^2$ be convex bodies with the origin in their interiors, and suppose that $A,L\in C_+^2$.
 Let $\phi:[0,\infty)\to[0,\infty)$ be nondecreasing, nonconstant, and convex, with $\phi\in C^2((0,\infty))$. Assume that $\mu$ is a probability measure with density $e^{-\phi(\|x\|_L)}$.  Suppose that
\begin{equation}\label{restr}\lim_{t \to \infty}\frac{\ln \phi'(t)}{\phi(t)}=0.
\end{equation}
 Here $\phi'(t)>0$ for all sufficiently large $t$, so the logarithm in \eqref{restr} is well defined on a tail interval.
Then
    \[
    \lim_{t\to\infty}\frac{\ln[-\mu(tA;B,C)]}{\phi(r(A,L)t)}=-1.
\]
\end{theorem}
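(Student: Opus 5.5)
The plan is to apply Theorem~\ref{theorem_mu} to the dilate $tA$ and then mimic the Gaussian argument of Theorem~\ref{thm:gaussian-rate}, with $\phi$ in place of $t^2/2$. Since $h_{tA}=th_A$, $f_{tA}=tf_A$ and $x_{tA}=tx_A$, and since $\nabla\|\cdot\|_L$ is $0$-homogeneous, we get
\[
-\mu(tA;B,C)=\int_0^{2\pi}e^{-\phi(t\|x_A(\theta)\|_L)}G_t(\theta)\,d\theta,
\]
where
\[
G_t(\theta)=h_Bh_C\bigl(t\,\phi'(t\|x_A\|_L)f_A\langle\nabla\|x_A\|_L,u\rangle-1\bigr)+h'_Bh'_C .
\]
The key geometric fact is that $\langle\nabla\|x\|_L,u(\theta)\rangle>0$ at $x=x_A(\theta)$. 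The vector $u(\theta)$ is the outer normal to $A$ at $x$, and $\langle\nabla\|x\|_L,x\rangle=\|x\|_L>0$ by Euler's identity. Moreover, $\langle\nabla\|x\|_L,u\rangle=\|x\|_L/h_L(\nabla\|x\|_L)\cdot\langle n_L(x/\|x\|_L),u\rangle\cdot(\text{const})$. It is cleaner to write $\nabla\|x\|_L=n_L(y)/h_L(n_L(y))$ with $y=x/\|x\|_L$.

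Positivity of $\langle n_L(y),u\rangle$ is not automatic for an arbitrary pair of normals. I would therefore bound this quantity only near the minimizers of $\|x_A\|_L$, which is where the integral concentrates. At a point $\theta_0$ with $\|x_A(\theta_0)\|_L=R:=r(A,L)$, the bodies $RL\subset A$ touch at $x_A(\theta_0)$. Because $L\in C^2_+$ is smooth there, they share a normal, so $n_L(y)=u(\theta_0)$ and the inner product equals $1/h_L(u(\theta_0))>0$. By continuity it stays $\ge c>0$ on a neighborhood $U$ of $\theta_0$.

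\textbf{Upper bound.} Everything other than $\phi'$ is bounded, with $|h'_B|,|h'_C|$ essentially bounded, so $|G_t|\le C(1+t\phi'(t\|x_A\|_L))$. Since $\|x_A\|_L\ge R$ everywhere, and $\phi$ and $\phi'$ are monotone, this yields
\[
|\mu(tA;B,C)|\le C\,(1+t\phi'(tM))\,e^{-\phi(tR)},\qquad M=\max\|x_A\|_L .
\]
Now $\ln t/\phi(tR)\to0$ by \eqref{eq:lim}. Condition \eqref{restr} gives $\ln\phi'(tM)=o(\phi(tM))$. Convexity gives $\phi(tM)\le(M/R)\phi(tR)+C$, so $\ln\phi'(tM)=o(\phi(tR))$. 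This handles the $\limsup\le-1$ direction and fixes the sign issue on the upper side.

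\textbf{Lower bound and positivity.} This is the main obstacle. On the set where the inner product is not bounded below, the integrand could be negative and cancel the main contribution. I would split the integral as $U\cup U^c$, choosing $U$ such that $\|x_A\|_L\ge R+\eta$ on $U^c$.

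On $U^c$, the integral is at most
\[
C\,(1+t\phi'(tM))\,e^{-\phi(t(R+\eta))},
\]
which by the argument above is $\exp(-\phi(t(R+\eta))(1+o(1)))$.

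On $U$, $\phi'$ is eventually $\ge a>0$, so for large $t$ we have $G_t\ge c't$ (the $-1$ and $h'_Bh'_C$ terms are $O(1)$). For the lower bound it is enough to restrict to the subset $U_\delta=\{\|x_A\|_L\le R+\delta\}\subset U$, which has positive measure by continuity. This gives
\[
\int_U\ge c't\,|U_\delta|\,e^{-\phi(t(R+\delta))}.
\]
Comparing the two pieces requires $\phi(t(R+\eta))-\phi(t(R+\delta))\to\infty$ faster than $\ln\phi'(tM)$ and $\ln t$. Convexity and nonconstancy give $\phi(t(R+\eta))-\phi(t(R+\delta))\ge(\eta-\delta)\,t\,a'$ eventually. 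Convexity also gives $\phi(t(R+\eta))\ge\frac{R+\eta}{R+\delta}\phi(t(R+\delta))-C$, so the difference is at least a fixed fraction of $\phi(t(R+\delta))$. That fraction dominates $o(\phi)$, so the error from $U^c$ is negligible.

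Hence $-\mu(tA;B,C)>0$ for large $t$, and
\[
\liminf_{t\to\infty}\frac{\ln[-\mu(tA;B,C)]}{\phi(tR)}\ge-\frac{\phi(t(R+\delta))}{\phi(tR)}+o(1).
\]
It remains to check that $\phi(t(R+\delta))/\phi(tR)$ can be made close to $1$ as $\delta\to0$. Convexity only gives an upper bound of $(R+\delta)/R$ asymptotically, which suffices: letting $\delta\to0$ yields $\liminf\ge-1$. Combined with the upper bound, this gives the limit $-1$.
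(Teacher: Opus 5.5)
There is a genuine gap in the lower bound, and a related one in the upper bound. The overall architecture does match the paper's: the representation from Theorem~\ref{theorem_mu}, positivity of $\langle\nabla\|x_A\|_L,u\rangle$ near the minimizers, and a split into a neighborhood of the minimizers plus a region where $\|x_A\|_L\ge R+\eta$. Your common-normal argument for the positivity is a valid substitute for the paper's first-order condition plus Euler identity.

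The lower-bound gap is in the final step. Since $(\phi(x)-\phi(0))/x$ is nondecreasing, convexity gives $\phi(t(R+\delta))\ge\frac{R+\delta}{R}\phi(tR)-C$. That is a \emph{lower} bound on the ratio $\phi(t(R+\delta))/\phi(tR)$, and nothing bounds it from above. Take $\phi(t)=e^t$ (up to a normalizing constant), which satisfies every hypothesis including \eqref{restr}. Then the ratio is $e^{\delta t}\to\infty$ for each fixed $\delta>0$. Your estimate $\int_U\ge c't\,|U_\delta|\,e^{-\phi(t(R+\delta))}$ therefore yields only $\liminf\ge-\infty$, and letting $\delta\to0$ afterwards does not help. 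A window of fixed width around a minimizer is too crude for fast-growing $\phi$: the window must shrink with $t$. Controlling how fast it shrinks is exactly where \eqref{restr} enters the lower bound; you use it only in upper bounds. The paper combines the Lipschitz bound $\|x_A(\theta)\|_L\le R+c(\theta-\theta_0)$ with the pointwise consequence $e^{-\phi(x)}\ge\phi'(x)e^{-(1+\varepsilon)\phi(x)}$ of \eqref{restr}. This turns the integrand into an exact derivative:
\[
\int_{\theta_0}^{\theta_0+\delta/2}e^{-\phi(t[R+c(\theta-\theta_0)])}\,d\theta\ge\frac{1}{(1+\varepsilon)ct}\Bigl(e^{-(1+\varepsilon)\phi(tR)}-e^{-(1+\varepsilon)\phi(t(R+c\delta/2))}\Bigr).
\]
Alternatively, you can keep your scheme with a $t$-dependent width defined by $\phi(t(R+\delta_t))=\phi(tR)+1$. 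The mean value theorem and \eqref{restr} give $1\le t\delta_t\phi'(t(R+\delta_t))\le t\delta_t e^{\varepsilon(\phi(tR)+1)}$. The Lipschitz bound gives $|U_{\delta_t}|\ge\delta_t/c$. Together these yield $\int_U\ge c''e^{-(1+\varepsilon)\phi(tR)}$.

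The upper bound and your $U^c$ estimate have a related defect. The inequality $\phi(tM)\le\frac{M}{R}\phi(tR)+C$ is again reversed. Moreover, \eqref{restr} compares $\ln\phi'(x)$ only with $\phi(x)$ at the same point, so $\ln\phi'(tM)=o(\phi(tR))$ does not follow, and it can fail. Suppose $\phi(\lambda s)=e^{\phi(s)}$ for large $s$, with $\lambda=M/R$. Then $\ln\phi'(\lambda s)=\phi(s)+\ln\phi'(s)-\ln\lambda$, so \eqref{restr} holds while $\ln\phi'(tM)\ge\phi(tR)-C$. In that case your bound $C(1+t\phi'(tM))e^{-\phi(tR)}$ does not even tend to $0$. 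The remedy, as in the paper, is not to decouple $\phi'$ from the exponential: apply $\phi'(x)e^{-\phi(x)}\le e^{-(1-\varepsilon)\phi(x)}$ at $x=t\|x_A(\theta)\|_L$. This gives $|\mu(tA;B,C)|\le Ct\,e^{-(1-\varepsilon)\phi(tR)}$. It also bounds the $U^c$ contribution by $Ct\,e^{-(1-\varepsilon)\phi(t(R+\eta))}$. By the correctly oriented convexity bound, this is negligible against $e^{-(1+\varepsilon)\phi(tR)}$ once $(1-\varepsilon)\frac{R+\eta}{R}>1+\varepsilon$. Finally, $U$ must be a neighborhood of the whole compact set of minimizers, not of a single $\theta_0$, for $\|x_A\|_L\ge R+\eta$ to hold on $U^c$.
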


\begin{proof}
Throughout the proof, $c_1, c_2, \ldots$ denote strictly positive constants independent of $t,$ though they may depend on $\phi$ and on the convex bodies $A, B, C$ and $L$. We first obtain an absolute upper estimate without taking logarithms. The lower-bound argument will then prove that $-\mu(tA;B,C)$ is positive for all sufficiently large $t$.

We note that the convex bodies $B$ and $C$ need not be smooth. Indeed, their support functions are Lipschitz on $\mathbb S^1$, so $h'_B$ and $h'_C$ exist almost everywhere and are essentially bounded. Thus all formulas below are understood almost everywhere, as in Theorem~\ref{theorem_mu}.

We start from the planar integral representation in Theorem~\ref{theorem_mu}. Using the scaling relations $x_{tA}(\theta)=t\,x_A(\theta)$, $f_{tA}(\theta)=t\,f_A(\theta)$, and $\nabla\|x_{tA}(\theta)\|_L=\nabla\|x_A(\theta)\|_L$, we obtain
\begin{align*}
\mu(tA;B,C)
&=\int_0^{2\pi}e^{-\phi(t\|x_A(\theta)\|_L)}\\
&\quad\times\Big\{
h_B(\theta)h_C(\theta)
\Big[1-t\phi'(t\|x_A(\theta)\|_L)f_A(\theta)\\
&\hspace{2.2cm}\times
\big\langle\nabla\|x_A(\theta)\|_L,u(\theta)\big\rangle\Big]
-h'_B(\theta)h'_C(\theta)
\Big\}\,d\theta.
\end{align*}
We first record an absolute upper estimate. Since $\phi'\ge0$,
\begin{align*}
|\mu(tA;B,C)|
&\le\int_0^{2\pi}e^{-\phi(t\|x_A(\theta)\|_L)}
\Big\{|h_B(\theta)h_C(\theta)|+|h'_B(\theta)h'_C(\theta)|\\
&\qquad+t\phi'(t\|x_A(\theta)\|_L)
\big|h_B(\theta)h_C(\theta)f_A(\theta)\big|\\
&\hspace{4.1cm}\times
\big|\langle\nabla\|x_A(\theta)\|_L,u(\theta)\rangle\big|
\Big\}\,d\theta.
\end{align*}
The functions $h_B,h_C,f_A$, and $|\nabla\|x_A\|_L|$ are bounded on $[0,2\pi]$, while $h'_B$ and $h'_C$ are essentially bounded. Also, $\phi'(t\|x_A(\theta)\|_L)\ge c_2>0$ uniformly in $\theta$ for all sufficiently large $t$. Hence
\begin{equation}\label{upperbound}
|\mu(tA;B,C)|\le
c_1t\int_0^{2\pi}e^{-\phi(t\|x_A(\theta)\|_L)}
\phi'(t\|x_A(\theta)\|_L)\,d\theta.
\end{equation}
By \eqref{restr}, for every $\varepsilon>0$ there exists $x_0>0$ such that
\[
\phi'(x)\le e^{\varepsilon\phi(x)}
\qquad(x>x_0).
\]
Since $\|x_A(\theta)\|_L$ is continuous and positive, this estimate holds uniformly with $x=t\|x_A(\theta)\|_L$ for all sufficiently large $t$. Therefore, for every $\varepsilon\in(0,1)$,
\begin{equation}\label{upper-absolute}
|\mu(tA;B,C)|
\le c_5t\int_0^{2\pi}e^{-(1-\varepsilon)\phi(t\|x_A(\theta)\|_L)}\,d\theta
\le c_6t e^{-(1-\varepsilon)\phi(r(A,L)t)}.
\end{equation}
No logarithm has been taken here; we will use \eqref{upper-absolute} after proving eventual positivity.

Next, we provide the lower bound. The main task is to control the sign of $\langle \nabla \| x_A(\theta)\|_{L},\,u(\theta) \rangle$ in a neighborhood of the minimizers of $ \| x_A(\theta)\|_{L}.$ 
Differentiating $\| x_A(\theta)\|_{L}$ gives
\begin{align*}
    \frac{d}{d\theta} \| x_A(\theta)\|_{L}&= \langle \nabla \| x_A(\theta)\|_{L}, x'_A(\theta) \rangle\\&=\langle \nabla \| x_A(\theta)\|_{L}, f_A(\theta) u'(\theta)\rangle \\&=f_A(\theta)\langle \nabla \| x_A(\theta)\|_{L},  u'(\theta)\rangle.
\end{align*}
Let $\theta_0\in[0,2\pi]$ minimize $\|x_A(\theta)\|_L$. Then
$\frac{d}{d\theta}\|x_A(\theta)\|_L|_{\theta=\theta_0}=0$.
Since $f_A(\theta_0)>0$, the preceding identity gives
$\langle\nabla\|x_A(\theta_0)\|_L,u'(\theta_0)\rangle=0$.
Thus $\nabla\|x_A(\theta_0)\|_L=c\,u(\theta_0)$ for some scalar $c$.
Euler's identity for the gauge and \eqref{h} yield
\[
\|x_A(\theta_0)\|_L
=\langle\nabla\|x_A(\theta_0)\|_L,x_A(\theta_0)\rangle
=c\,h_A(\theta_0).
\]
Because both $\|x_A(\theta_0)\|_L$ and $h_A(\theta_0)$ are positive,
\begin{equation}\label{signscalar}
\big\langle\nabla\|x_A(\theta_0)\|_L,u(\theta_0)\big\rangle
=c=\frac{\|x_A(\theta_0)\|_L}{h_A(\theta_0)}>0.
\end{equation}
Since $x_A(\theta)$ parametrizes $\partial A$ and $0\in\operatorname{int}A$,
\[
\min_{\theta\in[0,2\pi]}\|x_A(\theta)\|_L
=\max\{r>0:rL\subset A\}=r(A,L).
\]

Now define the set $\Omega =\{\theta \in [0, 2\pi] :  \| x_A(\theta)\|_{L} =r(A, L)\}$.  The choice of $\theta_0$ above was arbitrary, so \eqref{signscalar} holds at every point of $\Omega$. The set $\Omega$ is compact; hence the scalar product in \eqref{signscalar} has a strictly positive minimum on $\Omega$. By continuity, there exist $\delta>0$ and $c_7>0$ such that
\[
\big\langle \nabla \| x_A(\theta)\|_{L}, u(\theta) \big\rangle > c_7\qquad\text{for all }\theta \in \Omega_\delta,
\] 
where $\Omega_\delta=\{\theta:\operatorname{dist}_{\mathbb S^1}(\theta,\Omega)<\delta\}$. Moreover, since $\mathbb S^1\setminus\Omega_\delta$ is compact and $\|x_A(\theta)\|_L>r(A,L)$ there, there exists $\gamma>0$ such that
\[
\|x_A(\theta)\|_L\ge r(A,L)+\gamma
\qquad\text{for every }\theta\in
\Omega_\delta^c:=\mathbb S^1\setminus\Omega_\delta.
\]

For brevity, set
\begin{align*}
Q_t(\theta)
&=h_B(\theta)h_C(\theta)
\Big[1-t\phi'(t\|x_A(\theta)\|_L)f_A(\theta)\\
&\qquad\times\big\langle\nabla\|x_A(\theta)\|_L,u(\theta)\big\rangle\Big]
-h'_B(\theta)h'_C(\theta).
\end{align*}

If $\Omega_\delta^c$ is empty, there is no complementary contribution. Otherwise, we show that its contribution is negligible. More precisely, as $t\to\infty$,
\begin{equation}\label{small}
e^{\phi(r(A,L)t)}
\left|\int_{\Omega_\delta^c}e^{-\phi(t\|x_A(\theta)\|_L)}
Q_t(\theta)\,d\theta\right|\longrightarrow0.
\end{equation}
Using the same absolute-value estimate as in the upper-bound argument leading to \eqref{upperbound}, it suffices to prove that
\[
c_8t\,e^{\phi(r(A,L)t)}
\int_{\Omega_\delta^c}e^{-\phi(t\|x_A(\theta)\|_L)}
\phi'(t\|x_A(\theta)\|_L)\,d\theta\longrightarrow0.
\]
For $\theta \in \Omega_\delta^c$ and $t$ large, we study 
\[
\phi(t\,\| x_A(\theta)\|_{L}) - \phi(r(A, L)\,t) - \ln  \phi'(t\,\| x_A(\theta)\|_{L}).
\]
Define
\[
\omega = \min_{\theta \in \Omega_\delta^c} \left[ 1-\frac{\| x_A(\theta)\|_{L}+r(A, L)}{2 \| x_A(\theta)\|_{L} }\right]. 
\]
Note that 
\[
1-\frac{\| x_A(\theta)\|_{L}+r(A, L)}{2 \| x_A(\theta)\|_{L} } =\frac{1}{2} - \frac{r(A, L)}{2 \| x_A(\theta)\|_{L} }\ge\frac{1}{2} - \frac{r(A, L)}{2 (r(A, L) +\gamma) } >0,
\]
so $\omega >0.$ Applying hypothesis \eqref{restr}, we may assume that  for $t$ large enough
\[
\ln  \phi'(t\,\| x_A(\theta)\|_{L}) < \omega \, \phi(t\,\| x_A(\theta)\|_{L})
\qquad\text {for all }\theta \in \Omega_\delta^c.
\] Therefore
\begin{align*}
    &\phi(t\,\| x_A(\theta)\|_{L}) - \phi(r(A, L)\,t) - \ln  \phi'(t\,\| x_A(\theta)\|_{L})\\&\ge (1-\omega)\,\phi(t\,\| x_A(\theta)\|_{L}) - \phi(r(A, L)\,t)\\ &\ge \frac{\| x_A(\theta)\|_{L}+r(A, L)}{2 \| x_A(\theta)\|_{L} }\phi(t\,\| x_A(\theta)\|_{L}) - \phi( r(A, L)\,t)\\& = \frac{r(A, L)}{ \| x_A(\theta)\|_{L} }\phi(t\,\| x_A(\theta)\|_{L}) - \phi( r(A, L)\,t)\\ &\qquad\qquad + \frac{\| x_A(\theta)\|_{L}-r(A, L)}{2 \| x_A(\theta)\|_{L} }\phi(t\| x_A(\theta)\|_{L}).
\end{align*}
By convexity of $\phi$,
\begin{align*}
&\frac{d}{dt}\left(\frac{r(A,L)}{\|x_A(\theta)\|_L}
\phi(t\|x_A(\theta)\|_L)-\phi(r(A,L)t)\right)\\
&\qquad=r(A,L)\phi'(t\|x_A(\theta)\|_L)
-r(A,L)\phi'(r(A,L)t)\ge0.
\end{align*}
Thus the expression in parentheses is nondecreasing in $t$. By compactness of
$\Omega_\delta^c$, there is a constant $c\in\mathbb R$, independent of $t$ and $\theta$, such that, for all sufficiently large $t$,
\[
\frac{r(A,L)}{\|x_A(\theta)\|_L}\phi(t\|x_A(\theta)\|_L)-\phi(r(A,L)t)\ge c
\qquad(\theta\in\Omega_\delta^c).
\]
 Moreover, using that $\| x_A(\theta)\|_{L}-r(A, L) \ge \gamma>0$ on $\Omega_\delta^c,$ we obtain 
for $t$ sufficiently large
\[
\phi(t\,\| x_A(\theta)\|_{L}) - \phi(r(A, L)\,t) - \ln  \phi'(t\,\| x_A(\theta)\|_{L}) \ge c + c_9\, \phi(t\,\| x_A(\theta)\|_{L}).
\] Thus
\begin{align}
c_8\,t \,e^{\phi(r(A, L)\,t)}  \int _{\Omega_\delta^c} e^{-\phi(t\,\| x_A(\theta)\|_{L})} & \phi'(t\,\| x_A(\theta)\|_{L})  d\theta \nonumber\\  \le & c_{10}\,t \int _{\Omega_\delta^c} e^{-c_9\,\phi(t\,\| x_A(\theta)\|_{L})}\,d\theta \nonumber \\  \le
&c_{11} \,t \,e^{-c_9\,\phi(r(A,L)\,t)} \to 0. \label{eq:complement-decay}
\end{align}
We now turn to the contribution from $\Omega_\delta$. Consider
\begin{align*}
  -  \int _{\Omega_\delta} e^{-\phi(t\,\| x_A(\theta)\|_{L})}& \Big( h_B(\theta) h_C(\theta)\\
  &\times \Big[1- \phi'(t\,\| x_A(\theta)\|_{L}) t\,f_A(\theta) \Big\langle \nabla \| x_A(\theta)\|_{L}, u(\theta) \Big\rangle \Big] \\ & - h'_B(\theta) h'_C(\theta) \Big ) \, d\theta.
\end{align*}
On $\Omega_\delta,$ by the positivity of $\Big\langle \nabla \| x_A(\theta)\|_{L}, u(\theta) \Big\rangle$ and essential boundedness of $h_B, h_B', h_C, h'_C$ and  of $|\nabla \| x_A(\theta)\|_{L}|$, we have for large $t$  
\begin{align*}
- h_B(\theta) h_C(\theta)
 \Big[1- &\phi'(t\,\| x_A(\theta)\|_{L}) t\,f_A(\theta) \Big\langle \nabla \| x_A(\theta)\|_{L}, u(\theta) \Big\rangle \Big] + h'_B(\theta) h'_C(\theta)\\
&\ge c+ c_{12} \phi'(t\| x_A(\theta)\|_{L})t\\
& \ge c_{13} \phi'(t\| x_A(\theta)\|_{L}) t,
\end{align*}
for some $c\in\mathbb R$. Here we used that $t\phi'(t\|x_A(\theta)\|_L)\to\infty$ uniformly for $\theta\in\Omega_\delta$. It therefore remains to estimate the positive integral over $\Omega_\delta$ from below.  Pick $\theta_0\in\Omega$. By periodicity, we may choose the parametrization so that
$[\theta_0,\theta_0+\delta/2]\subset\Omega_\delta$. Then
\[
\begin{aligned}
&\int _{\Omega_\delta} e^{-\phi(t\,\| x_A(\theta)\|_{L})}
  \phi'(t\,\| x_A(\theta)\|_{L})\,d\theta\\
&\qquad\ge \int _{\theta_0}^{\theta_0+\delta/2}
  e^{-\phi(t\,\| x_A(\theta)\|_{L})}
  \phi'(t\,\| x_A(\theta)\|_{L})\,d\theta.
\end{aligned}
\]
Using that $\| x_A(\theta)\|_{L}$ is a Lipschitz function, we obtain
\[
\| x_A(\theta)\|_{L} \le r(A, L) + c(\theta-\theta_0),
\]
for some $c>0$. Also, using $\phi'(t\,\| x_A(\theta)\|_{L}) \ge c_{14},$ we obtain 
\begin{equation}\label{step}
\int_{\Omega_\delta}\!\! e^{-\phi(t\| x_A(\theta)\|_{L})}\phi'(t\| x_A(\theta)\|_{L})   d\theta \ge c_{14} \int_{\theta_0}^{\theta_0+\delta/2}\!\!\! e^{-\phi(t  [r(A, L) + c(\theta-\theta_0)])} d\theta.
\end{equation}
Finally, by \eqref{restr}, for every $\varepsilon\in(0,1)$ there exists $t_\varepsilon>0$ such that, for all $t>t_\varepsilon$ and all $\theta\in[\theta_0,\theta_0+\delta/2]$,
\begin{equation}\label{eq:phi-prime-bound}
\phi'(t[ r(A, L) + c(\theta-\theta_0)]) \le e^{\varepsilon \phi(t[r(A, L) + c(\theta-\theta_0)])}.
\end{equation}
Combining \eqref{eq:phi-prime-bound} with \eqref{step}, we obtain
\begin{align}\label{eqlowe}
&\int_{\Omega_\delta}e^{-\phi(t\|x_A(\theta)\|_L)}\phi'(t\|x_A(\theta)\|_L)\,d\theta\nonumber\\
&\quad\ge c_{14}\int_{\theta_0}^{\theta_0+\delta/2}e^{-(1+\varepsilon)\phi(t[r(A,L)+c(\theta-\theta_0)])}\phi'(t[r(A,L)+c(\theta-\theta_0)])\,d\theta\nonumber\\
&\quad=\frac{c_{14}}{(1+\varepsilon)ct}\Big(e^{-(1+\varepsilon)\phi(tr(A,L))}-e^{-(1+\varepsilon)\phi(t(r(A,L)+c\delta/2))}\Big)\nonumber\\
&\quad\ge\frac{c_{15}}t e^{-(1+\varepsilon)\phi(tr(A,L))},
\end{align}
where the last inequality holds for large $t$ because
$\phi(t(r(A,L)+c\delta/2))-\phi(tr(A,L))\to\infty$. 
Combining \eqref{eqlowe} with \eqref{eq:complement-decay}, and choosing
$0<\varepsilon<c_9$, so that the contribution of $\Omega_\delta^c$ has smaller exponential order, we obtain, for all sufficiently large $t$,
\begin{align*}
-\mu(tA;B,C)
&=-\int_0^{2\pi}e^{-\phi(t\|x_A(\theta)\|_L)}Q_t(\theta)\,d\theta\\
&\ge c_{16}e^{-(1+\varepsilon)\phi(r(A,L)t)}.
\end{align*}
The preceding estimate proves, in particular, that $-\mu(tA;B,C)>0$ for all sufficiently large $t$. We may therefore take logarithms. For every $0<\varepsilon<c_9$ it gives
\[
\liminf_{t\to\infty}
\frac{\ln[-\mu(tA;B,C)]}{\phi(r(A,L)t)}\ge-(1+\varepsilon).
\]
On the other hand, \eqref{upper-absolute} and \eqref{eq:lim} imply
\[
\limsup_{t\to\infty}
\frac{\ln[-\mu(tA;B,C)]}{\phi(r(A,L)t)}\le-(1-\varepsilon)
\qquad(0<\varepsilon<1).
\]
Letting $\varepsilon\downarrow0$ proves the asserted limit.

This completes the proof. 
\end{proof}

\begin{remark}
The ball example explains the role of hypothesis~\eqref{restr}. Indeed, if
$L=A=B=C=B_2^2$, then
\[
\mu(tA;B,C)=2\pi e^{-\phi(t)}\bigl(1-t\phi'(t)\bigr),
\]
which is asymptotic to $-2\pi t\phi'(t)e^{-\phi(t)}$ whenever $t\phi'(t)\to\infty$. Thus, without an assumption controlling $\ln\phi'(t)$ relative to $\phi(t)$, the additional term $\ln t+\ln\phi'(t)$ may contribute to the logarithmic rate. This shows that a condition of the form \eqref{restr} is natural for the stated conclusion.
\end{remark}

\begin{remark}
The higher-dimensional analogue remains open. The main obstacle is that a direct application of \eqref{defmeas} does not allow us to control the sign of $\langle\nabla\rho(n_A^{-1}(u)),\nabla h_B(u)\rangle$ near a tangency point of $A$ and $r(A,L)L$, unlike the argument leading to \eqref{signscalar}. The difficulty arises because this scalar product also depends on $B$.
\end{remark}

\end{document}